\title[Riesz Decompositions ]{Riesz Decompositions for Schrödinger Operators on Graphs}
\author{Florian Fischer}
\address{Florian Fischer, Institute of Mathematics, University of Potsdam, Germany}
\email{florifis@uni-potsdam.de}
\author{Matthias Keller}
\address{Matthias Keller, Institute of Mathematics, University of Potsdam, Germany}
\email{matthias.keller@uni-potsdam.de}
\newtheorem{theorem}{Theorem}[section]
\newtheorem{lemma}[theorem]{Lemma}
\newtheorem{proposition}[theorem]{Proposition}
\newtheorem{corollary}[theorem]{Corollary}
\theoremstyle{definition}
\newtheorem{remark}[theorem]{Remark} 
\newtheorem{definition}[theorem]{Definition}
\xdef\csname c\x\endcsname{\noexpand\ensuremath{\noexpand\mathcal{\x}}}
\xdef\csname \x\x\endcsname{\noexpand\ensuremath{\noexpand\mathbb{\x}}}
\xdef\csname s\x\endcsname{\noexpand\ensuremath{\noexpand\mathscr{\x}}}
\xdef\csname b\x\endcsname{\noexpand\ensuremath{\noexpand\mathbf{\x}}}
\xdef\csname b\x\endcsname{\noexpand\ensuremath{\noexpand\mathbf{\x}}}
\xdef\csname tilde\x\endcsname{\noexpand\ensuremath{\noexpand\widetilde{\x}}}
\xdef\csname tilde\x\endcsname{\noexpand\ensuremath{\noexpand\widetilde{\x}}}
\xdef\csname hat\x\endcsname{\noexpand\ensuremath{\noexpand\widehat{\x}}}
\xdef\csname hat\x\endcsname{\noexpand\ensuremath{\noexpand\widehat{\x}}}
\xdef\csname bar\x\endcsname{\noexpand\ensuremath{\noexpand\bar{\x}}}
\xdef\csname bar\x\endcsname{\noexpand\ensuremath{\noexpand\bar{\x}}}
\xdef\csname btilde\x\endcsname{\noexpand\ensuremath{\noexpand\mathbf{\widetilde{\x}}}}
\xdef\csname btilde\x\endcsname{\noexpand\ensuremath{\noexpand\mathbf{\widetilde{\x}}}}
\xdef\csname bhat\x\endcsname{\noexpand\ensuremath{\noexpand\mathbf{\widehat{\x}}}}
\xdef\csname bhat\x\endcsname{\noexpand\ensuremath{\noexpand\mathbf{\widehat{\x}}}}
\xdef\csname d\x\endcsname{\noexpand\ensuremath{\noexpand\mathds{\x}}}
\newcommand{\abs}[1]{\left\lvert #1\right\rvert} 
\newcommand{\set}[1]{\left\{ #1\right\} }
\renewcommand{\epsilon}{\varepsilon}
\renewcommand{\phi}{\varphi}
\newcommand{\ph}{\varphi}
\DeclareMathOperator{\ee}{e}
\DeclareMathOperator{\supp}{supp}
\DeclareMathOperator{\dd}{d}
\DeclareMathOperator{\ghm}{ghm}
\newcommand{\sse}{\subseteq}
\newcommand{\eat}[1]{}
\newcommand{\Hmm}[1]{\leavevmode{\marginpar{\tiny%
			$\hbox to 0mm{\hspace*{-0.5mm}$\leftarrow$\hss}%
			\vcenter{\vrule depth 0.1mm height 0.1mm width \the\marginparwidth}%
			\hbox to 0mm{\hss$\rightarrow$\hspace*{-0.5mm}}$\\\relax\raggedright #1}}}
\begin{document}

\begin{abstract}We study superharmonic functions for Schrödinger operators on general weighted graphs. Specifically, we prove two decompositions which both go under the name Riesz decomposition in the literature. The first one decomposes a superharmonic function into a harmonic and a potential part. The second one decomposes a superharmonic function into a sum of superharmonic functions with  certain upper bounds given  by prescribed superharmonic functions. As application we show a Brelot type theorem.

\textbf{Keywords:} Potential theory, Green's function, Schrödinger operator, weighted graph, subcritical, greatest harmonic minorant.
\end{abstract}
\maketitle

\section{Introduction}
Schrödinger operators in the Euclidean space have been studied for a long time and a profound potential theory has been developed. On graphs  potential theory was mainly studied in the context of random walks. However, in recent years there is a rising interest in general Schrödinger operators which goes beyond the probabilistic framework. The analysis and spectral theory of these operators received enormous attention, see e.g. \cite{BGK15,BP,CTT11,GKS16,GMT14,GS11, KL12,KLSW17, KR16,KS17}. Especially, the study of Hardy inequalities \cite{G14,KPP16} relies on a profound understanding of (super)harmonic functions, see also \cite{Fitzsimmons,KePiPo1,Tak14,Tak16}.

A classical and fundamental tool to study superharmonic functions are  \emph{Riesz decompositions}.  In this paper we study two  of these decompositions for superharmonic functions of Schrödinger operators on graphs.  The first decompositions deals with   superharmonic functions which are bounded from below by a (sub)harmonic function. Then, the superharmonic function can be decomposed into  a harmonic and a potential part, see Theorem~\ref{thm:rieszdecompGHM}.
Such a decomposition is referred to in the literature as \emph{Riesz decomposition}, see \cite{AG, Helms, HelmsNeu}). The second decomposition considers a superharmonic function $ s $ which is smaller than the sum of two  superharmonic functions $ s_{1} $ and $ s_{2} $. Then, $ s $ can be decomposed into the sum of two superharmonic functions $ r_{1} $ and $ r_{2} $ such that  $ r_{1} \leq s_{1}$ and $ r_{2}\leq s_{2} $, see Theorem~\ref{thm:MS}. In the literature this is also referred to as Riesz decompostion, see \cite{Boboc,Hansen}, but also as Mokobodzki-Sibony decomposition, see \cite{HelmsNeu}, so, we will refer to it as \emph{Riesz-Mokobodzki-Sibony decomposition}.

In the context of random walks on graphs the first decomposition, the Riesz decompostion, is well known for non-negative superharmonic functions, see \cite{KSK,Soardi,WoessRandom,WoessMarkov}.
However, the  Schrödinger operators we study here do not have a probabilistic interpretation. Moreover, we wish to treat not only non-negative superharmonic functions but also superharmonic functions which are only bounded from below by a subharmonic functions. 
To this end, the probabilistic approach does not seem to work and we rely on potential theoretic arguments to obtain the result. However, one can recover a substantial part of the probabilistic method via the ground state transform. In this case we even get a probabilistic type representation of the harmonic part and an alternative formula for the potential part in the Riesz decomposition, see Theorem~\ref{thm:repHarPot}.

For the second decomposition, the Riesz-Mokobodzki-Sibony decomposition, we are not aware of a discrete analogue. Although it is certainly well known in the context of random walks, again our  proof for Schrödinger operators relies on potential theoretic arguments rather than probabilistic ones. We expect this to be useful in the study of limits of superharmonic functions at the Martin boundary.

As an application we present a Brelot type theorem. In the continuum this theorem gives an equality for the charge of a superharmonic function in terms of the infimum of the quotient of the function and the Green's function. However, in contrast to the continuum setting we only get one inequality which we show to be strict.

The paper is structured as follows. In the next section, Section~\ref{s:setup}. we introduce the setting and present the main results. In Section~\ref{s:toolbox}, we study the fundamental tools to prove the main theorems such as the Dirichlet problem and greatest harmonic minorants. In Section~\ref{s:proofs} we prove the main theorems and in Section~\ref{sec:rep} we give a probabilistic type representation of the Riesz decompostion. Finally, in Section~\ref{s:application} we show a Brelot type theorem.

\section{Setting the Scene and Main Results}\label{s:setup}
In this section we present the underlying notions of this work and state the main results.
\subsection{Graphs, Schrödinger Operators and Subcriticality}
Let $X$ be an infinite set equipped with the discrete topology. 
Let a symmetric function  $b\colon X\times X \to [0,\infty)$ with zero diagonal be given such that $ b $ is locally summable, i.e., the vertex degree $ \deg $ satisfies $$ \deg(x)=\sum_{y\in X}b(x,y)<\infty$$  for all $ x\in X $. We refer to $ b $ as a \emph{graph} over $X$ and the elements of $X$ are called \emph{vertices}. A subset $W\sse X$ is called \emph{connected} with respect to the graph $b$, if for every vertices $x,y\in W$ there is a path ${x_0,\ldots ,x_n \in W}$, such that $x=x_0$, $y=x_n$ and $b(x_{i-1},x_i)>0$ for all $i\in\set{1,\ldots, n}$. Throughout this paper we will always assume that 
\begin{center}$X$ is connected with respect to the graph  $b$.\end{center}

The space of real valued functions on $W\subseteq X$ is denoted by $\mathcal{C}(W)$ and the space of functions with compact support in $W$ is denoted by $ \mathcal{C}_c(W)$. We consider $\mathcal{C}(W)$ to be a subspace of $\mathcal{C}(X)$ by extending the functions of $\mathcal{C}(W)$ by zero on $X\setminus W$. 

A strictly positive function $m\colon X\to (0,\infty)$ extends to a measure with full support via ${m(W)= \sum_{x\in W}m(x)}$ for $W\sse X$.


For $W\sse X$, let the space $ \mathcal{F}(W)=\mathcal{F}_{b}(W) $ be given by
\[ \mathcal{F}(W)= \{ f\in \mathcal{C}(X)\mid \sum_{y\in W} b(x,y)\abs{f(y)} < \infty  \mbox{ for all } x\in X  \}.\]
We set $\cF=\cF(X)$ and define  the \emph{(formal) Schrödinger operator} $ H=H_{b,c,m}$ on $\cF$ via
\begin{align*}
Hf(x)=\frac{1}{m(x)}\sum_{y\in X} b(x,y)\bigl( f(x)-f(y)\bigr)+\frac{c(x)}{m(x)}f(x),\qquad x\in X,
\end{align*}
where $ c \in \mathcal{C}(X)$ is a function. 
 A function $u\in \mathcal{F}$ is  \emph{harmonic, (superharmonic, subharmonic)} on $W\sse X$ if \[Hu=0 \quad(Hu\ge 0,\, Hu\leq 0)\text{ on }W .\]
The operator $ H_{b,c,m} $ is said to be \emph{non-negative}
 on $  \cC_{c} (X)$ if for all $ \phi\in\cC_{c}(X) $ we have
 \begin{align*}
 \sum_{x\in X}(H_{b,c,m}\phi)(x) \phi(x) m(x)\ge 0,\qquad x\in X.
 \end{align*}
 By the Allegretto-Piepenbrink theorem, \cite{HK11, KePiPo1}, this is equivalent to the existence of a positive superharmonic function.


\subsection{Green's Functions and Potentials}\label{s:Green}

Let a graph $ b $ and a function $ c $ be given such that $ H $ is non-negative. We denote the restriction of  the operator $ H $ to $ \mathcal{C}(K) $ for a finite set $ K $ by $ H^{K} $. It is not hard to see that $ H^{K} $ is invertible on $ \mathcal{C}(K) $ due non-negativity of $ H $ and   the connectedness of $ X $, confer \cite[Lemma~5.15]{KePiPo1}. Furthermore, due to domain monotonicity one has $ (H^{K})^{-1}\ph\leq (H^{L})^{-1}\ph  $ for $\ph\in \mathcal{C}_{c}( K) $ for $ K\subseteq L $.

Let an increasing exhaustion  $(K_n)$ of $X$ with finite sets be given. We define the function $G=G_{b,c,m}\colon X\times X \to [0,\infty]$  via
\[G(x,y)=\lim_{n\to\infty}\bigl(H^{K_n}\bigr)^{-1}1_{y}(x), \]
for  $x,y\in X$, where $1_y$ is the characteristic function at $y\in X$, confer \cite[Theorem~5.16]{KePiPo1}.

Indeed, $ G $ is independent of the choice of $ (K_{n}) $.
In general $ G $ can take the value $ \infty $ but if  $G(x,y)< \infty$ holds for some  $x,y\in X$, then it  holds for all $ x,y\in X $, see \cite[Theorem~5.12]{KePiPo1}. Moreover, if the Green's function is finite for some measure $ m $ then it can be checked that it is finite for all measures, confer \cite{Schmidt}.

We call $ G=G_{b,c,m}  $ the \emph{Green's function} of $H=H_{b,c,m} $ in $X$. We call the operator $ H $   \emph{subcritical} if for some (all) $ x,y\in X $ we have
\begin{align*}
G(x,y)<\infty.
\end{align*}

\begin{remark}\label{r:subcritical}
	In  \cite[Theorem~5.3]{KePiPo1} it is shown that subcriticality is equivalent to the validity of a Hardy inequality, i.e., the existence of a function $ w\ge0 $ on $ X $	such that
	\begin{align*}
	\sum_{x\in X}(H\phi)(x)\phi(x)m(x)\ge \sum_{x\in X}w(x) \phi^2(x),\qquad \phi\in \cC_{c}(X).
	\end{align*}
	 Moreover, subcriticality is equivalent to the existence of  a least two linearly independent positive superharmonic functions and thence it implies that the corresponding Schrödinger operator is also non-negative.
	
	In probability theory one considers graphs with $c\geq 0$ and the measure $ m=\deg+c $. In this context a graph with a subcritical Schrödinger form is called \emph{transient}, see e.g. \cite{Fitzsimmons,Fuku,KePiPo1,Soardi,WoessRandom,WoessMarkov}, for an elaborate study on transience.
\end{remark}

\begin{remark}
	There are equivalent formulations of the Green's function via resolvents  or semigroups of a  self-adjoint realization $H_m$ on $\ell^2(X,m)$ of the Schrödinger operator $H$, i.e., 
	\[G(x,y)=\lim_{\alpha\downarrow 0}(H_m+\alpha)^{-1} 1_y(x)= \int_0^\infty \ee^{-tH_m} 1_y(x)\,\dd t,\qquad x,y\in X.\]
	Here, $\ell^2(X,m)$ denotes the space of square $ m $-summable functions.	For details see \cite{KePiPo1}.
\end{remark}

The function $ G $  is strictly positive, symmetric with respect to $ m $, superharmonic and if $H$ is subcritical then for all $y\in X$
\[HG(y,\cdot)=HG(\cdot,y)=1_y.\]
Moreover, for fixed $ y\in X $, the function $G(\cdot,y)$ is the smallest  function $ u\ge0 $ in $ \mathcal{F} $ such that $Hu\geq 1_y$, see \cite[Theorem~5.16]{KePiPo1}.

We denote the space of \emph{$G$-integrable} functions on $X$ by $ \mathcal{G}=\mathcal{G}_{b,c,m} $
\[\cG=\{f\in \cC(X)\mid \mbox{$ \sum_{y\in X} G(x,y)\abs{f(y)}<\infty $ for all $x\in X$}\}.\] 
Clearly, $ \cG $ is non-empty if and only if $H$ is subcritical in which case it obviously includes $ \cC_{c}(X) $. For $ f\in \mathcal{G} $ and $ x\in X $, we denote 
\begin{align*}
Gf(x)=\sum_{y\in X} G(x,y){f(y)}.
\end{align*}
Decomposing $ f \in \cG$ into positive and negative parts and approximating these parts monotonously  via compactly supported functions we have $ Gf\in \mathcal{F} $ and 
$$ HGf=f $$ by monotone convergence.

\begin{definition}
Let $H=H_{b,c,m}$ be subcritical. A function $p\in \mathcal{C}(X)$ is called a   \emph{potential} if  there is $f\in \cG$ such that \[p=Gf .\] The function $f$ is then called a \emph{charge of }$p$ and $ p  $ the \emph{potential of} $ f $.
\end{definition}

\subsection{Main Results}

In this subsection we present the main results of the paper which are two decompositions for superharmonic functions. Both are known as Riesz decompositions in the literature.

The first decomposition allows us to decompose a superharmonic function  into a harmonic and a potential part (provided the superharmonic function is bounded from below by a subharmonic function). 
Recall that a function is called a minorant of another function if it is smaller or equal everywhere.

The second decomposition, which is also known as the Mokobodzki-Sibony decomposition, states that if a the sum of two positive superharmonic functions has a positive superharmonic minorant, then this minorant can be decomposed into two positive superharmonic minorants of the original superharmonic functions.

Recall that a function $g\in \mathcal{C}(X)$ is called \emph{minorant} of $f\in \mathcal{C}(X)$ if $g\leq f$. Moreover, $h\in \mathcal{C}(X)$ is called  \emph{greatest harmonic minorant} of $f\in \mathcal{C}(X)$  if $h$ is a harmonic minorant of $f$ and for all other harmonic minorants $g$ of $f$ we have $g \leq h$. Clearly, the greatest harmonic minorant is unique in case it exists and we write 
\[h=\ghm_f.\]

Next, we present our first main result, the Riesz decomposition.

\begin{theorem}[Riesz Decomposition]\label{thm:rieszdecompGHM}
Let $H=H_{b,c,m}$ be subcritical and let $s$ be a superharmonic function with a subharmonic minorant. Then there exists a unique decomposition \[s = s_p+s_h\] such that $s_p$ is a non-negative potential with charge $Hs\in\cG$  and $s_h=\ghm_{s}$, in particular, $ s $ has a greatest harmonic minorant. 
Moreover, if $s\ge 0 $ then $s_{h}\geq 0$, and $s$ is the potential of a non-negative charge if and only if ${s}_{h}=0$.
\end{theorem}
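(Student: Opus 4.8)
The plan is to construct the potential part explicitly as $s_p := G(Hs)$ once we know $Hs \in \cG$, and then define $s_h := s - s_p$ and verify it is the greatest harmonic minorant. The first task is therefore to show that $Hs \in \cG$, i.e. that $\sum_y G(x,y)(Hs)(y) < \infty$ for all $x$. Since $s$ is superharmonic, $Hs \ge 0$ everywhere, so the issue is purely one of summability. Here I would exploit the hypothesis that $s$ has a subharmonic minorant $v$: set $w := s - v \ge 0$, which is superharmonic with $Hw = Hs - Hv \ge Hs \ge 0$. Fix an exhaustion $(K_n)$ and use the explicit formula for $(H^{K_n})^{-1}$; a discrete Riesz-type identity (Green's formula / summation by parts on $K_n$) gives, for $x \in K_n$,
\[
\sum_{y \in K_n} G_{K_n}(x,y)\,(Hw)(y) = w(x) - h_n(x),
\]
where $G_{K_n} = (H^{K_n})^{-1}$ and $h_n$ is the $H^{K_n}$-harmonic function on $K_n$ with boundary data $w$ on $X \setminus K_n$ (so $h_n \ge 0$ since $w \ge 0$). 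Because $G_{K_n}(x,y) \uparrow G(x,y)$ and $Hw \ge 0$, the left side is monotone increasing in $n$ and bounded above by $w(x) < \infty$; hence by monotone convergence $\sum_y G(x,y)(Hw)(y) \le w(x) < \infty$, so $Hw \in \cG$, and since $0 \le Hs \le Hw$ pointwise, also $Hs \in \cG$. This is the step I expect to be the main obstacle — making the boundary-term bookkeeping on the finite subgraphs rigorous and passing to the limit cleanly — though it is essentially the classical argument transported to the graph setting and the tools from Section~\ref{s:toolbox} should streamline it.

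Granting $Hs \in \cG$, set $s_p := G(Hs)$; by the discussion after the definition of $\cG$ in the excerpt, $s_p \in \cF$ and $H s_p = Hs$, and $s_p \ge 0$ since $G \ge 0$ and $Hs \ge 0$. Define $s_h := s - s_p$. Then $H s_h = Hs - Hs_p = 0$, so $s_h$ is harmonic, and $s_h = s - s_p \le s$, so $s_h$ is a harmonic minorant of $s$. For the \emph{greatest} harmonic minorant property, let $g$ be any harmonic minorant of $s$. Then $s - g \ge 0$ is superharmonic with $H(s-g) = Hs$, and since $G(Hs)$ is, by the characterization quoted from \cite[Theorem~5.16]{KePiPo1}, the \emph{smallest} non-negative function $u \in \cF$ with $Hu \ge Hs$ (apply this charge by charge via monotone approximation, or directly using that $s-g$ is a non-negative supersolution of $Hu = Hs$), we get $s_p = G(Hs) \le s - g$, i.e. $g \le s - s_p = s_h$. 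Hence $s_h = \ghm_s$. Uniqueness of the decomposition is immediate: if $s = \tilde s_p + \tilde s_h$ with $\tilde s_p$ a potential and $\tilde s_h$ harmonic, then $H\tilde s_p = Hs$ forces $\tilde s_p = G(Hs) = s_p$ by the same minimality (a potential is by definition $G$ of its charge, and the charge is determined by applying $H$), whence $\tilde s_h = s_h$.

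For the final assertions: if $s \ge 0$, then $s$ itself is a subharmonic minorant only if it is harmonic, but more to the point $0$ need not be a minorant of $s$ in general — however $s_h = \ghm_s$ and $s \ge 0$ together with $s_p \ge 0$ do not yet give $s_h \ge 0$ directly; instead I would argue that when $s \ge 0$ the constant function $0$ is itself a harmonic minorant of $s$, so $s_h = \ghm_s \ge 0$. Finally, $s$ is the potential of a non-negative charge iff $s = G(Hs)$ (with $Hs \ge 0$ its charge) iff $s = s_p$ iff $s_h = s - s_p = 0$; the forward direction uses uniqueness of the charge, and the reverse is immediate from $s = s_p + s_h = s_p = G(Hs)$. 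This completes the proof.
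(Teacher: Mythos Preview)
Your proof is correct and follows a somewhat different path from the paper's. The paper first invokes Theorem~\ref{thm:propertiesGHM} to secure the existence of $\ghm_s$, then (temporarily assuming $Hs\in\cG$) uses Theorem~\ref{thm:GHM} (namely $\ghm_{GHs}=0$) together with additivity of greatest harmonic minorants to identify $s-GHs$ with $\ghm_s$, and only at the end establishes $Hs\in\cG$ by bounding $G(1_{K_n}Hs)\le s-\ghm_s$ via another $\ghm$-argument that again relies on Theorem~\ref{thm:GHM}. Your argument is more self-contained: you prove $Hs\in\cG$ first via the identity $G_{K_n}\bigl((Hw)\vert_{K_n}\bigr)=w-h_n$ on $K_n$ (with $w=s-v$ and $h_n$ the Dirichlet solution with boundary data $w$), and you then identify $s_h$ as the greatest harmonic minorant by directly appealing to the fact that $G(Hs)$ is the smallest non-negative supersolution of $Hu\ge Hs$, rather than going through $\ghm_{GHs}=0$ and additivity. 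The one place you should be slightly more explicit is this minimality of $Gf$ for a general $0\le f\in\cG$: the paper only quotes it for point charges $f=1_y$, so you need the short extension (for $u\ge 0$ with $Hu\ge f$, apply the minimum principle to $u-G_{K_n}(1_{K_n}f)$ on $K_n$ and let $n\to\infty$), which is precisely the monotone-approximation step you allude to. In summary, your route trades the toolbox Theorems~\ref{thm:propertiesGHM} and~\ref{thm:GHM} for a direct finite-volume computation and the minimum principle; the paper's route has the advantage that those toolbox results are of independent interest and are reused elsewhere (e.g.\ in Theorem~\ref{thm:MS} and Section~\ref{sec:rep}).
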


\begin{remark}As stated in the theorem the  potential part is given by $s_p=GHs$. Furthermore, it becomes clear from the proof that the harmonic part arises as $s_h=\lim_{n\to\infty}s_n$, where $s_n$ is the solution of the Dirichlet problem with respect to $s$ on an increasing exhaustion $(K_n)$ of $X$ with finite sets, see Theorem~\ref{thm:propertiesGHM}.
\end{remark}

\begin{remark}
For a continuous analogue of Theorem~\ref{thm:rieszdecompGHM}, see \cite[Theorem~4.4.1]{AG} or  also \cite[Theorem~3.5.11]{HelmsNeu}. In these works one proves the decomposition by approximating the charge of the potential part by compactly supported functions and taking the monotone limit. We use a similar strategy to prove the decomposition in the discrete case. The continuous version of this theorem goes back to \cite[p. 350]{Riesz}. There is also a discrete version of this theorem in the context of transient random walks, i.e., $c\geq 0$, see e.g. \cite[Theorem~6.43]{WoessMarkov}, \cite[Theorem~24.2]{WoessRandom}, \cite[Theorem 1.38]{Soardi}. There, the superharmonic function is assumed to be non-negative.
\end{remark}

\begin{remark} Let us comment on a related decomposition in the literature. 
	In \cite{Soardi} (see also \cite{GHKLW}) the so called Royden decomposition is proven for functions of finite energy.  It states that such a function can be decomposed into a function which can be approximated by compactly supported functions and a harmonic functions. The proof relies on the Hilbert space structure of the functions of finite energy which we do not have at our disposal in our situation. 	
\end{remark}

Our second main result is the so-called Riesz-Mokobodzki-Sibony Decomposition.
\begin{theorem}[Riesz-Mokobodzki-Sibony Decomposition]\label{thm:MS}
Let $H_{b,c,m}$ be subcritical and let $s, s_1, s_2\ge0$ be superharmonic functions such that $s\leq s_1+s_2$. Then there exist unique  superharmonic functions  $0\leq r_1\leq s_{1}$  and  $0\leq  r_2\le s_{2}$ such that  \[  s=r_1+r_2.\]
\end{theorem}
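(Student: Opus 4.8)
The strategy is to construct $r_1$ as the greatest harmonic-type minorant of the "candidate" function obtained by truncating $s$ against $s_1$, and then define $r_2 := s - r_1$, verifying at the end that $r_2$ is superharmonic with $0 \le r_2 \le s_2$. More precisely, the plan is to consider the family
\[
\mathcal{S}_1 = \{ t \in \mathcal{F} \mid t \text{ superharmonic}, \ 0 \le t \le s_1, \ s - t \text{ superharmonic} \}.
\]
This family is non-empty: since $0 \le s - s_2 \le s_1$ (using $s \le s_1 + s_2$) and $s - s_2$ need not be superharmonic, one should instead first observe that $0 \in \mathcal{S}_1$ is not quite enough to get the extremal element we want, so a better route is to take the infimum (or rather a reduced function / r\'eduite) of $s_1$ relative to the constraint $s - t \ge 0$ and superharmonic. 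The clean approach using the tools already developed is: apply the Riesz decomposition (Theorem~\ref{thm:rieszdecompGHM}) to each of $s_1$ and $s_2$ to split them into potentials plus greatest harmonic minorants, and reduce to handling the potential parts and the harmonic parts separately, since both $s_i$ have subharmonic (indeed, zero, or their own harmonic) minorants.

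For the \textbf{potential case}, write $s_i = G f_i$ with $f_i = H s_i \ge 0$ in $\mathcal{G}$ (here I use that $s_i \ge 0$ is a potential when $\ghm_{s_i} = 0$). Then $s \le G f_1 + G f_2 = G(f_1 + f_2)$, and since $s$ is superharmonic and bounded above by a potential, $s$ is itself a potential with charge $g := Hs \in \mathcal{G}$ satisfying $0 \le g \le f_1 + f_2$ pointwise. Now split the charge: set $g_1 := g \wedge f_1$ and $g_2 := g - g_1$. Then $0 \le g_1 \le f_1$ and $0 \le g_2 \le g - g\wedge f_1 \le f_2$ (the last inequality from $g \le f_1 + f_2$), both are in $\mathcal{G}$, and defining $r_i := G g_i$ gives superharmonic functions with $0 \le r_i \le G f_i = s_i$ and $r_1 + r_2 = G g = s$. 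This is the heart of the matter and it works because on the level of charges the decomposition is just the elementary splitting of a nonnegative function $g$ dominated by a sum $f_1 + f_2$ into pieces dominated by $f_1$ and $f_2$ respectively.

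For the \textbf{harmonic-minorant contributions} and the general case, the plan is to reduce everything to potentials by subtracting greatest harmonic minorants: if $h_i = \ghm_{s_i}$, one reduces $s$ by $h_1 + h_2$ where possible; the subtlety is that $s$ might be smaller than $h_1 + h_2$ and the difference $s - (h_1+h_2)$ need not be nonnegative, so instead I would handle it by first establishing that $s$ has a greatest harmonic minorant $h$ (via Theorem~\ref{thm:rieszdecompGHM}, since $0$ is a subharmonic minorant of $s\ge 0$) with $0 \le h \le h_1 + h_2$, applying the splitting at the level of harmonic functions if a comparison-type argument allows it, and then applying the potential-case argument to $s - h \le (s_1 - h_1) + (s_2 - h_2)$, all of which are nonnegative superharmonic functions whose harmonic minorants vanish, hence potentials. \textbf{Uniqueness} is the remaining point: if $s = r_1 + r_2 = r_1' + r_2'$ with the stated bounds, then $r_1 - r_1' = r_2' - r_2$ is simultaneously bounded above by $s_1$ and below by $-s_2$, i.e. it is a difference of superharmonic functions that is both sub- and superharmonic hence harmonic, and being squeezed between a potential-dominated quantity on both sides one argues (using subcriticality, so that the only harmonic function which is $\le$ a potential is $0$, cf. the last sentence of Theorem~\ref{thm:rieszdecompGHM}) that it must be zero.

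The \textbf{main obstacle} I anticipate is the passage from nonnegative superharmonic functions to potentials in the general (non-potential) case: when $\ghm_{s_i} \ne 0$, one cannot simply subtract $h_1 + h_2$ from $s$ because $s - (h_1 + h_2)$ may fail to be nonnegative, and the harmonic pieces do not split as cleanly as charges do. Resolving this will likely require either a r\'eduite/balayage construction directly on $s$ relative to $s_1$ (defining $r_1$ as the infimum over superharmonic $t$ with $t \le s_1$ and $t \ge s - s_2$, then regularizing), together with the fact from Section~\ref{s:toolbox} that such infima of superharmonic functions yield superharmonic functions after lower-regularization, or an exhaustion argument solving the finite-dimensional analogue of the splitting on each $K_n$ and passing to the limit. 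I expect the exhaustion approach to be the more robust one given the tools already set up for the Dirichlet problem and greatest harmonic minorants.
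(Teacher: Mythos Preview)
Your main proposed route---reducing to potentials via Theorem~\ref{thm:rieszdecompGHM} and then splitting the charges pointwise---has a genuine gap at what you call ``the heart of the matter''. From $s \leq G(f_1+f_2)$ you conclude that the charge $g = Hs$ of $s$ satisfies $g \leq f_1 + f_2$ pointwise, but this implication is false: $G$ is positivity-preserving, not order-reflecting. Concretely, on $\mathbb{Z}^d$ with $d \geq 3$ take $f_1 = 1_a$, $f_2 = 1_b$, and $g = \lambda 1_c$ for a third vertex $c$ and small $\lambda > 0$; since $G(\cdot,c)/\bigl(G(\cdot,a)+G(\cdot,b)\bigr)$ is bounded, one has $\lambda G1_c \leq G1_a + G1_b$ for $\lambda$ small enough, yet $g(c) = \lambda > 0 = (f_1+f_2)(c)$. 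So the charge-splitting step $g_1 = g \wedge f_1$, $g_2 = g - g_1$ does not in general yield $g_2 \leq f_2$, and the potential-case argument collapses. Your uniqueness sketch has a separate problem: a difference of superharmonic functions is not ``both sub- and superharmonic''; $H(r_1 - r_1')$ is a difference of two nonnegative functions and can have either sign, so you cannot conclude that $r_1 - r_1'$ is harmonic.

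The paper's proof is in fact close to the r\'eduite idea you mention in passing and then set aside. It defines $r_1$ as the pointwise infimum of all nonnegative superharmonic $v$ with $v \geq s - s_2$ (no upper bound $v \leq s_1$ is imposed a priori; it falls out automatically), and then $r_2$ as the infimum of all nonnegative superharmonic $v$ with $v \geq s - r_1$. Lemma~\ref{lem:infsuberharmonic} makes these infima superharmonic with no regularization needed. The inequality $s \leq r_1 + r_2$ is immediate; the real work is the reverse inequality, which reduces to showing $s - r_2$ is superharmonic. That is done by a local argument: for each vertex $x$ one compares $r_2$ with its one-point Dirichlet solution $r_{2,x}$ and uses the minimality of $r_2$ in its defining class to obtain $r_2 - r_{2,x} \leq s - s_x$, which is exactly the superharmonicity of $s - r_2$ at $x$. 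Uniqueness in the paper is likewise extracted from the extremal (infimum) nature of $r_1$ and $r_2$, not from any harmonicity argument. Neither the Riesz decomposition of Theorem~\ref{thm:rieszdecompGHM} nor an exhaustion by finite sets is used.
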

For the continuous analogue, see \cite[Theorem 4.6.9]{HelmsNeu}.
\begin{remark}
 The name of the Riesz-Mokobodzki-Sibony decomposition seems to have changed over the years.    In \cite{Boboc} and \cite{Hansen} the continuous version of this theorem is called Riesz decomposition. In \cite{Hansen} this decomposition is also one of the four assumptions of a so-called balayage space. In \cite{HelmsNeu} the name Mokobodzki-Sibony theorem is used. The first proof of such a theorem in the continuum seems to go back to  \cite{MS68}.
\end{remark}

\begin{remark}
These Riesz decompositions are the fundamental tools to develop a Choquet-Martin boundary theory which generalises the existing theory in the probabilistic case, see \cite{WoessRandom}, to all graphs with corresponding subcritical Schrödinger operator, see \cite[Chapter 5]{Masterarbeit}. For instance, Theorem~\ref{thm:rieszdecompGHM} is the crucial step to get the so-called discrete Poisson-Martin integral representation. To gain certain boundary limits, Theorem~\ref{thm:MS} is of peculiar interest, see \cite[Section~4.4 and Chapter~6]{Masterarbeit}.
\end{remark}

\section{Toolbox}\label{s:toolbox}
The proofs of the theorems above are inspired by classical potential theoretic arguments in the continuum case as they can be found \cite{AG} and \cite{HelmsNeu}. To this end, solutions of a  Dirichlet problems along an exhaustion play a crucial role.

For the remainder of the section let $ b $ be a connected graph and let $ c$ be a function such that the operator $ H =H_{b,c,m}$ is {non-negative}.

\subsection{Dirichlet Problems on Finite Subgraphs}
The \emph{Dirichlet problem} on $W\sse X$ with respect to $f\in\mathcal{C}(X)$ is the problem of finding a function $u\in\mathcal{C}(X)$ such that 
\begin{align*}
\begin{cases} Hu= 0 &\text{ on } W, \\ \phantom{H}u=f &\text{ on } X\setminus W.\end{cases}
\end{align*}
The function $ u\in\mathcal{C}(X)$ is then referred to as the \emph{solution of the Dirichlet problem on} $ W $ \emph{with respect to} $ f $.

It is well known that due to positivity of $ H $ and connectedness of the graph these Dirichlet problems always have a unique solution. For the convenience of the reader we provide a short argument.

\begin{lemma}[Existence of Unique Solutions to Dirichlet Problems]\label{lem:dirichletProblem}
Let $H_{b,c,m}$ be non-negative on $\mathcal{C}_c(X)$, let $K\sse X$ be finite and let $f\in\mathcal{F}$. Then there exists a unique solution $ u $ to the Dirichlet problem  on $K$ with respect to $f$. Moreover, if $ f\ge0 $, then $ u\ge0 $.
\end{lemma}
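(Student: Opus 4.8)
The plan is to reduce the Dirichlet problem to a finite-dimensional linear system, and then to use a minimum principle for the positivity statement.

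\emph{Reduction to a finite system.} I would set $\tilde f := f\cdot 1_{X\setminus K}$, so that a function $u\in\cC(X)$ satisfies $u=f$ on $X\setminus K$ precisely when $w:=u-\tilde f$ is supported in $K$; for such $u$ and $x\in K$ one has $Hu(x)=H^{K}w(x)+H\tilde f(x)$, and $H\tilde f(x)=-\sum_{y\in X\setminus K}b(x,y)f(y)$ is a well-defined real number because $f\in\cF$. Hence the Dirichlet problem is equivalent to solving $H^{K}w=g$ in the finite-dimensional space $\cC(K)$, where $g:=-(H\tilde f)|_{K}$. So existence and uniqueness of $u$ follow once $H^{K}\colon\cC(K)\to\cC(K)$ is known to be invertible (uniqueness also being immediate, since a difference of two solutions lies in $\ker H^{K}$).

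\emph{Invertibility of $H^{K}$.} This is the step I expect to require the most care, and it is where both hypotheses — non-negativity of $H$ and connectedness of $X$ — enter. By finite-dimensionality it suffices to prove injectivity. Given $\ph\in\cC(K)$ with $H^{K}\ph=0$, I would compare $\ph$ with a strictly positive superharmonic function $w_{0}$, which exists by the Allegretto–Piepenbrink theorem. If $\ph/w_{0}$ attained a strictly positive maximum $M$ at some $x_{0}\in K$, then (using $\ph=0$ off $K$ and $w_{0}>0$) we would have $\ph\le Mw_{0}$ everywhere with equality at $x_{0}$; subtracting $H\ph(x_{0})=0$ from $M\,Hw_{0}(x_{0})\ge 0$, the $c$-terms cancel since $\ph(x_{0})=Mw_{0}(x_{0})$, leaving $\sum_{y}b(x_{0},y)\bigl(Mw_{0}(y)-\ph(y)\bigr)\le 0$ with all summands non-negative, hence $\ph(y)=Mw_{0}(y)$ at every neighbour of $x_{0}$. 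Connectedness then lets me propagate this equality along a path from $x_{0}$ to a vertex of $X\setminus K$ (which exists since $K$ is finite while $X$ is infinite and connected), producing a vertex where $0=\ph=Mw_{0}>0$, a contradiction. Thus $\ph\le 0$ on $K$, and applying the same to $-\ph$ gives $\ph=0$; alternatively one may simply invoke the invertibility of $H^{K}$ recorded, via \cite[Lemma~5.15]{KePiPo1}.

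\emph{Positivity.} For the last assertion, suppose $f\ge 0$ and let $u$ be the solution. I would run the same comparison with the minimum of $u/w_{0}$ in place of the maximum: if this minimum $\mu$ were negative, attained at some $x_{0}\in K$, then $u\ge \mu w_{0}$ everywhere — now using $u=f\ge 0>\mu w_{0}$ on $X\setminus K$ — with equality at $x_{0}$, and $Hu(x_{0})=0$ together with $Hw_{0}(x_{0})\ge 0$ forces $u=\mu w_{0}$ at all neighbours of $x_{0}$; propagating to $X\setminus K$ as before yields a boundary vertex $y$ with $f(y)=\mu w_{0}(y)<0$, contradicting $f\ge 0$. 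Hence $u\ge 0$ on $K$, and $u=f\ge 0$ on $X\setminus K$, so $u\ge 0$ on $X$.
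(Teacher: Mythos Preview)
Your proof is correct and takes essentially the same route as the paper: reduce the Dirichlet problem on $K$ to a finite linear system, then appeal to invertibility of the restricted operator and a positivity statement. The only differences are cosmetic --- the paper phrases the reduction via the modified potential $c+d$ and cites \cite[Lemma~5.15 and Corollary~3.5]{KePiPo1} for invertibility and positivity preservation of the resolvent, whereas you supply self-contained maximum/minimum-principle arguments against a positive superharmonic function (note only the minor slip that $H\tilde f(x)$ should carry the factor $1/m(x)$).
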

\begin{proof}
	By a direct calculation, one sees that $ u $ is a solution  to the Dirichlet problem if and only if $ u $ satisfies
	\begin{align*}
	H_{b,c+d,m}u=g\quad \mbox{	on $ K $,}
	\end{align*}
 where $ d(x)=\sum_{y\in X\setminus K}b(x,y) $ and $ g(x)=\frac{1}{m(x)}\sum_{y\in X\setminus K}b(x,y)f(y) $ for $ x\in K $ and $d=g=0  $ on $ X\setminus K $. Note that the sum in the definition of  $ g $ converges absolutely due to the assumption $ f\in \mathcal{F} $.  
 Since the restriction $ H_{b,c+d,m}^{K} $ is invertible on $ \cC_{c}(K) $, confer \cite[Lemma~5.15]{KePiPo1}, and the resolvent is positivity preserving, confer \cite[Corollary~3.5]{KePiPo1}, we obtain the result.
\end{proof}
An important tool for the following potential theory on graphs is the so-called minimum principle.

\begin{theorem}[Minimum Principle, Lemma 5.14 in \cite{KePiPo1}] \label{thm:minprinc}
Let $H=H_{b,c,m}$ be non-negative on $\mathcal{C}_c(X)$. If $u\in \mathcal{F}$ satisfies $(H+\alpha)u\geq 0$ for $  \alpha \ge0 $ on a finite set $K\subseteq X$ and $u\geq 0$ on $X\setminus K$, then 
 $u=0$ or $u>0$ on $K$.
\end{theorem}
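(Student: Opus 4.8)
The plan is to leverage non-negativity of $H$ via a positive superharmonic comparison function and to compare $u$ with a suitable multiple of it rather than with a constant. First I would invoke the Allegretto--Piepenbrink theorem recalled above to fix a strictly positive superharmonic $w\in\cF$, so $w>0$ and $Hw\ge 0$ on $X$. Since $K$ is finite and $w>0$, the number $\epsilon:=\min_{x\in K}u(x)/w(x)$ is attained, say at $x_{0}\in K$. If $\epsilon>0$, then $u\ge\epsilon w>0$ on $K$ and we are done; so the substantial case is $\epsilon\le 0$, in which it suffices to show $u\equiv 0$ on $K$.

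The key move is to pass to $g:=u-\epsilon w\in\cF$. I expect the main obstacle to be exactly the (possibly sign-changing) zeroth-order term $c$: subtracting the \emph{constant} $\epsilon$ would leave $(H+\alpha)$ uncontrolled, whereas subtracting $\epsilon w$ works, since $-\epsilon\,(H+\alpha)w=-\epsilon\,(Hw+\alpha w)\ge 0$ whenever $\epsilon\le 0$. With this choice I would check: (i) $g\ge 0$ on all of $X$ — on $K$ by the definition of $\epsilon$ and $w>0$, on $X\setminus K$ from $u\ge 0$ together with $\epsilon w\le 0$; (ii) $g(x_{0})=0$, as $u(x_{0})=\epsilon w(x_{0})$; and (iii) $(H+\alpha)g=(H+\alpha)u-\epsilon(H+\alpha)w\ge 0$ on $K$, using $(H+\alpha)u\ge 0$ on $K$, $Hw\ge 0$, $\alpha w\ge 0$ and $\epsilon\le 0$.

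The last step is a propagation (strong minimum) argument. At any vertex $x$ with $g(x)=0$ and $(H+\alpha)g(x)\ge 0$, writing out $(H+\alpha)g(x)$ and using $g(x)=0$ kills the zeroth-order contribution, leaving $-\tfrac{1}{m(x)}\sum_{y}b(x,y)g(y)$, which is $\le 0$ because $g\ge 0$; hence it vanishes, forcing $g(y)=0$ for every neighbour $y$ of $x$. Applying this at $x_{0}$ and iterating along edges inside $K$ — here connectedness of $K$ is used — yields $g\equiv 0$ on $K$. Finally, since $X$ is infinite and connected while $K$ is finite, some edge joins a vertex $x\in K$ to a vertex $z\in X\setminus K$; one further application of the computation at $x$ gives $g(z)=0$, i.e.\ $u(z)=\epsilon w(z)$. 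Combining $u(z)\ge 0$ (as $z\notin K$) with $\epsilon w(z)\le 0$ and $w(z)>0$ forces $\epsilon=0$; then $g=u$, so $u\equiv 0$ on $K$, which completes the dichotomy.
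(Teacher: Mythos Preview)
The paper does not give its own proof of this statement; it is quoted verbatim from \cite{KePiPo1} (Lemma~5.14) and used as a black box. So there is nothing in the present paper to compare your argument against.

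Your argument itself is the standard comparison-with-a-positive-supersolution proof and is correct in substance: the choice $g=u-\epsilon w$ with $w>0$ superharmonic is exactly the right device to absorb the sign-indefinite zeroth-order term, and the propagation step is carried out correctly. The one point worth flagging is your explicit appeal to connectedness of $K$. This is \emph{not} among the stated hypotheses, and in fact the sharp dichotomy ``$u\equiv 0$ on $K$ or $u>0$ on $K$'' genuinely fails without it: on $\mathbb{Z}$ with the standard graph Laplacian, $K=\{-2,2\}$, $\alpha=0$ and $u=1_{\{2\}}$ satisfy all hypotheses while $u(-2)=0$ and $u(2)=1$. What your argument does yield without connectedness (by running the propagation on the connected component of $x_{0}$ in $K$ and then hitting a boundary vertex in $X\setminus K$) is that $\epsilon\ge 0$, hence $u\ge 0$ on $K$, with the dichotomy holding on each connected component of $K$ separately. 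This weaker conclusion is all the present paper ever uses (see the proofs of Corollary~\ref{cor:dirichletProblem}, Theorem~\ref{thm:propertiesGHM}, and Theorem~\ref{thm:Brelot}), so your added hypothesis reflects a slight imprecision in the formulation here rather than a gap in your reasoning.
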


The minimum principle has the following immediate corollary.
\begin{corollary}\label{cor:dirichletProblem}
Let $H_{b,c,m}$ be non-negative and let $ s $  be a superharmonic function.
Then 
for any finite set $ K $ the unique solution $u$ to the Dirichlet problem  on $K$ with respect to  $s$ satisfies \[ u\leq s .\] 
\end{corollary}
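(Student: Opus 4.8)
The plan is to apply the Minimum Principle (Theorem~\ref{thm:minprinc}) to the difference $v = s - u$. Since $s$ is superharmonic, it lies in $\mathcal{F}$ by definition, and since $u$ coincides with $s$ on $X\setminus K$ while $K$ is finite, the function $v$ vanishes on $X\setminus K$; in particular $v\in\mathcal{C}_c(X)\sse\mathcal{F}$, so $Hv$ is well defined and $Hv = Hs - Hu$ pointwise on $K$ by linearity of $H$ on $\mathcal{F}$.

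First I would record the two defining properties of $v$: on $X\setminus K$ we have $v = s - f = 0 \ge 0$, and on $K$ we have $Hv = Hs - Hu = Hs \ge 0$, where the last inequality uses that $s$ is superharmonic and $Hu = 0$ on $K$ by the Dirichlet condition. Then I would invoke Theorem~\ref{thm:minprinc} with $\alpha = 0$: since $(H+0)v \ge 0$ on the finite set $K$ and $v \ge 0$ on $X\setminus K$, it follows that $v = 0$ on $K$ or $v > 0$ on $K$. In either case $v \ge 0$ on $K$, and combined with $v = 0$ on $X\setminus K$ this gives $v \ge 0$ on all of $X$, i.e. $u \le s$.

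There is essentially no substantial obstacle here; the only point requiring a moment of care is verifying that $v$ is admissible for the Minimum Principle, namely that $v\in\mathcal{F}$ and that the algebraic identity $Hv = Hs - Hu$ holds on $K$ — both of which follow immediately from $v$ having finite (indeed compact) support in $K$. Uniqueness of the Dirichlet solution, already established in Lemma~\ref{lem:dirichletProblem}, is used only to speak of \emph{the} solution $u$.
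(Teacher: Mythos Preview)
Your proposal is correct and follows essentially the same route as the paper: set $v=s-u$, observe $Hv\ge 0$ on $K$ and $v=0$ on $X\setminus K$, and apply the minimum principle, Theorem~\ref{thm:minprinc}, with $\alpha=0$. The only addition is your explicit remark that $v\in\mathcal{C}_c(X)\subseteq\mathcal{F}$, which is a harmless clarification; note the small typo ``$v=s-f$'' should read ``$v=s-u$'' (with $u=s$ on $X\setminus K$).
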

\begin{proof}
Let $ u $	be the unique solution of the Dirichlet problem on a finite set $ K $ with respect to $ s $. On $K$, we have $H(s-u)\geq 0$ and on $X\setminus K$ we have $s-u=0$. By the minimum principle, Theorem~\ref{thm:minprinc}, we get that $s \geq u$ on $K$.
\end{proof}
\subsection{Existence and Properties of Greatest Harmonic Minorants}

In this section we study greatest harmonic minorants of superharmonic functions. Specifically, we show that these greatest harmonic minorants exist whenever there is a subharmonic minorant. Moreover, we prove that greatest harmonic minorants can be approximated by solutions of Dirichlet problems, are additive and monotone. Later we show that the greatest harmonic minorant of the Green's function is the zero function. 
These results are well known in classical potential theory in the continuum, see \cite[Section~3.3]{Helms}, but they seem to be new in the setting of Schrödinger operators on graphs.



\begin{theorem}[Existence and Properties of Greatest Harmonic Minorants]\label{thm:propertiesGHM}
Let $H=H_{b,c,m}$ be non-negative on $\mathcal{C}_c(X)$. Let $s$ be superharmonic with subharmonic minorant $ u $. Then $s$ has a greatest harmonic minorant $\ghm_s$ such that $ \ghm_{s}\ge u $. 

Moreover, we have 
\[ \ghm_s=\lim_{n\to\infty}s_n\]
as a pointwise limit, where $s_n$ is the solution of the Dirichlet problem with respect to $s$ on an increasing exhaustion $(K_n)$ of $ X $ with finite sets.

Furthermore, greatest harmonic minorants are additive, i.e., if $t$ is superharmonic with a subharmonic minorant, then $\ghm_{s+t}$ exists and 
\[ \ghm_{s+t}=\ghm_s+\ghm_t.\]

Moreover, greatest harmonic minorants are monotone, i.e., if $s\leq t$, then
\[\ghm_s \leq \ghm_t.\]
\end{theorem}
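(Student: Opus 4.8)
The plan is to build everything on the sequence $(s_n)$ of Dirichlet solutions with respect to $s$ on the exhaustion $(K_n)$, using Lemma~2.5 and Corollary~2.7 as the workhorses. First I would record monotonicity of the sequence: for $K_n\subseteq K_{n+1}$, the function $s_{n+1}$ is harmonic on $K_{n+1}\supseteq K_n$, hence superharmonic on $K_n$, and agrees with $s\ge s_n$ off $K_n$ (since $s_n=s$ there and $s_{n+1}\le s$ there by Corollary~2.7), so applying the minimum principle (Theorem~2.6) to $s_{n+1}-s_n$ on $K_n$ gives $s_{n+1}\ge s_n$ on $K_n$, and trivially off $K_n$ as well after comparing; thus $(s_n)$ is pointwise nondecreasing. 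Next I would produce a ceiling for the sequence. The subharmonic minorant $u$ satisfies $u\le s$ everywhere and $Hu\le 0$; on each $K_n$ we have $H(s_n-u)\ge 0$ and $s_n-u=s-u\ge 0$ off $K_n$, so the minimum principle yields $s_n\ge u$ on $K_n$, hence $u\le s_n\le s$ for all $n$. Therefore $h:=\lim_n s_n$ exists pointwise and satisfies $u\le h\le s$. That $h$ is harmonic follows because on any fixed vertex $x$, $x\in K_n$ for all large $n$, where $Hs_n(x)=0$; since $s_n\to h$ pointwise and the defining sum for $H$ at $x$ is a finite-tail-controlled absolutely convergent sum dominated by $u$ and $s$ (both in $\mathcal F$), one passes to the limit (dominated convergence for the series) to get $Hh(x)=0$. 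I should check $h\in\mathcal F$, which is immediate from $u\le h\le s$ with $u,s\in\mathcal F$ and the triangle inequality on $|h|$.

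To see $h=\ghm_s$, let $g$ be any harmonic minorant of $s$. Then $g\le s$, and on each $K_n$ we have $H(s_n-g)=Hs_n-Hg=0$ on $K_n$ while $s_n-g=s-g\ge 0$ off $K_n$; the minimum principle gives $s_n\ge g$ on $K_n$, so $g\le s_n$ for all $n$, hence $g\le h$. Thus $h$ is a harmonic minorant dominating every harmonic minorant, i.e. $h=\ghm_s$, and moreover $\ghm_s\ge u$ as already shown.

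For additivity: if $t$ is superharmonic with subharmonic minorant $v$, then $s+t$ is superharmonic with subharmonic minorant $u+v$, so $\ghm_{s+t}$ exists by the first part. The Dirichlet solution is linear in the boundary data (by uniqueness in Lemma~2.5, the solution operator $f\mapsto u$ is linear), so the Dirichlet solution with respect to $s+t$ on $K_n$ is $s_n+t_n$, where $t_n$ is the Dirichlet solution with respect to $t$. Passing to the limit, $\ghm_{s+t}=\lim_n(s_n+t_n)=\ghm_s+\ghm_t$. For monotonicity: if $s\le t$, then $\ghm_s$ is a harmonic minorant of $s$, hence of $t$, so $\ghm_s\le\ghm_t$ by the defining maximality of $\ghm_t$.

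The main obstacle I anticipate is the limit interchange showing $Hh=0$: one must justify that $\sum_{y}b(x,y)(s_n(x)-s_n(y))\to\sum_y b(x,y)(h(x)-h(y))$, which requires a dominating summable function. The bound $u\le s_n\le s$ gives $|s_n(y)|\le\max(|u(y)|,|s(y)|)$ with $u,s\in\mathcal F$, so $y\mapsto b(x,y)\max(|u(y)|,|s(y)|)$ is summable and dominated convergence applies; this is routine once the two-sided bound is in place, which is why establishing $u\le s_n\le s$ early is the structurally important step.
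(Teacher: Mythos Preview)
Your approach is essentially the same as the paper's, but you have the direction of monotonicity backwards, and your application of the minimum principle does not yield what you claim. Off $K_n$ you have $s_n=s$ while $s_{n+1}\le s$ (by Corollary~3.2 applied on $K_{n+1}$), so $s_{n+1}-s_n\le 0$ there, not $\ge 0$; the minimum principle (Theorem~3.1) therefore applies to $s_n-s_{n+1}$ and gives $s_n\ge s_{n+1}$, i.e.\ the sequence is \emph{nonincreasing}. Your sentence ``and trivially off $K_n$ as well after comparing'' is likewise reversed. This is exactly what the paper does: it shows $(s_n)$ decreases and is bounded below by the subharmonic minorant $u$, then takes the limit.

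Fortunately your proof survives this slip because you separately establish the two-sided bound $u\le s_n\le s$, so a monotone limit exists either way and your dominated-convergence argument for $Hh=0$ (using $|s_n|\le |u|+|s|$ with $u,s\in\mathcal F$) goes through unchanged; the paper uses monotone convergence here instead, which is equivalent given the monotonicity. The remaining parts---maximality via comparison with an arbitrary harmonic minorant $g$, additivity via linearity of the Dirichlet solution on each $K_n$, and monotonicity via the defining maximality of $\ghm_t$---match the paper's proof.
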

\begin{proof}
Let $(K_n)$ be an increasing exhaustion of $X$ with finite sets and let $s_n$ denote the solution of the Dirichlet problem on $K_n$ with respect to  $s$ which exists for every $n\in\mathbb{N}$ by Lemma~\ref{lem:dirichletProblem}.

Firstly, we show the existence of the greatest harmonic minorant: By Corollary~\ref{cor:dirichletProblem}, we get  $s \geq s_n$ on $K_n$ for every $n\in \mathbb{N}$. On $K_{n}$, we have $H(s_{n}-s_{n+1})=0$ and  on $X\setminus K_n$, we have $s_n-s_{n+1}=s-s_{n+1}\geq 0$. Hence, by the minimum principle, Theorem~\ref{thm:minprinc}, we get  $$ s_n\geq s_{n+1}\mbox{ on }X $$ for every $n\in\mathbb{N}$ which means that we have an decreasing sequence of harmonic functions.
Thus, there exists a pointwise limit $$ s_\infty=\lim_{n\to\infty}s_n $$ which might take the value $ -\infty $ on vertices. However, this is not the case: By assumption $s$ has a subharmonic minorant which we denote by $ u $. Then,  we have $H(s_n-u)\geq 0$ on  $K_n$ and $s_n-u=s-u\geq 0$  on $X\setminus K_n$, $ n\in\mathbb{N} $. By the minimum principle, Theorem~\ref{thm:minprinc}, we obtain $s_n\geq u$ on $X$ for all $n\in\mathbb{N}$. So, $$ s\geq s_\infty\geq u $$  which shows finiteness of $ s_{\infty} $.  By monotone convergence, we find that the function $s_\infty$  is in  $\mathcal{F}$ and  \[0= \lim_{k\to\infty} Hs_k= Hs_\infty.\]
Thus, $ s_\infty $ is a harmonic minorant.

Finally, let $ v $ be another harmonic minorant. Then again, on $ K_{n} $, we have $ H(s_{n}-v)=0 $ and $ s_{n}-v=s-v\ge0 $ on $ X\setminus K_{n} $. Hence, the minimum principle,  Theorem~\ref{thm:minprinc}, yields $ s_{n}\ge v $ and, therefore, $ s_{\infty}\ge v $. Thus,
\begin{align*}
s_{\infty}=\ghm_{s}.
\end{align*}
\smallskip

Secondly, we show the additivity property: Consider the solutions of the Dirichlet problem $s_n$ and $t_n$ on $ K_{n} $ with respect to $ s $ and $ t $,  $n\in \mathbb{N}$. Since $s_n+t_n$ solves  the Dirichlet problem 
\begin{align*}
\begin{cases} Hw= 0 &\text{ on } K_n, \\ \phantom{H}w=s+t &\text{ on } X\setminus K_n, \end{cases}
\end{align*}
we obtain by  the above
\begin{align*}
\ghm_{s+t}=(s+t)_{\infty}=\lim_{n\to\infty}(s_{n}+t_{n})=\lim_{n\to\infty}s_{n}+\lim_{n\to\infty}t_{n}=s_{\infty}+t_{\infty}=\ghm_{s}+\ghm_{t}.
\end{align*}
Thirdly, we show  monotonicity: If $ s\le t $, then $ \ghm_{s} $ is a harmonic minorant of $ t $. Thus, $ \ghm_{s}\leq \ghm_{t} $ as $ \ghm_{t} $ is the greatest harmonic minorant of $ t $.
\end{proof}

\subsection{Greatest harmonic minorants of potentials}

Next, we relate greatest harmonic minorants and potentials. Recall that if $ H_{b,c,m} $ is subcritical, then it is positive by the Allegretto-Piepenbrink theorem, \cite[Theorem~4.2]{KePiPo1} (see Remark~\ref{r:subcritical} as well). Moreover,   the Green's function exists in this case.
\begin{theorem}\label{thm:GHM}
Let $H_{b,c,m}$ be  subcritical. Then, for  $ f\in \mathcal{G} $,
\begin{itemize}
\item[(a)] 
 $ \ghm_{Gf}$ exists, 
\item[(b)]   $ \ghm_{Gf}\leq 0$,
\item[(c)]   $\ghm_{Gf}=0$  if $ f\ge0 $. 
\end{itemize}
\end{theorem}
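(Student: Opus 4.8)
The plan is to derive all three parts from Theorem~\ref{thm:propertiesGHM} together with an explicit formula for the difference between $Gf$ and the solutions of the Dirichlet problem along an exhaustion.

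The substance of the statement is part (c), so I would treat it first. Let $f\ge0$ with $f\in\cG$. Then $Gf\ge0$ and $HGf=f\ge0$, so $Gf$ is a non-negative superharmonic function, and by Theorem~\ref{thm:propertiesGHM} (using $0$ as a subharmonic minorant) the greatest harmonic minorant $\ghm_{Gf}$ exists, is non-negative, and equals $\lim_{n\to\infty}(Gf)_n$, where $(Gf)_n$ denotes the solution of the Dirichlet problem on $K_n$ with respect to $Gf$ for a fixed increasing exhaustion $(K_n)$ of $X$ with finite sets. The key step is the identity $Gf-(Gf)_n=(H^{K_n})^{-1}(f1_{K_n})$, where the right-hand side is extended by zero: the left-hand side vanishes on $X\setminus K_n$, hence has support in $K_n$, and on $K_n$ it satisfies $H\bigl(Gf-(Gf)_n\bigr)=f$; invertibility of $H^{K_n}$ on $\cC(K_n)$ then yields the identity. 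Writing $(H^{K_n})^{-1}(f1_{K_n})=\sum_{y\in K_n}f(y)\,(H^{K_n})^{-1}1_{y}$ and invoking domain monotonicity together with $(H^{K_n})^{-1}1_{y}\uparrow G(\cdot,y)$, monotone convergence gives $(H^{K_n})^{-1}(f1_{K_n})\uparrow Gf$ pointwise. Hence $(Gf)_n\downarrow 0$ and $\ghm_{Gf}=0$, proving (c).

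For (a) and (b) I would decompose $f=f_{+}-f_{-}$ with $f_{\pm}\in\cG$, so that $Gf=Gf_{+}-Gf_{-}$ with $Gf_{\pm}$ non-negative potentials satisfying $\ghm_{Gf_{\pm}}=0$ by (c). For the bound in (b): if $h$ is any harmonic minorant of $Gf$, then $h+Gf_{-}$ is superharmonic with subharmonic minorant $h$, so by additivity of greatest harmonic minorants (Theorem~\ref{thm:propertiesGHM}) one gets $\ghm_{h+Gf_{-}}=h+\ghm_{Gf_{-}}=h$; since $h+Gf_{-}\le Gf_{+}$, monotonicity gives $h=\ghm_{h+Gf_{-}}\le\ghm_{Gf_{+}}=0$. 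Thus every harmonic minorant of $Gf$ is $\le0$. For the existence in (a) it then remains to produce one harmonic minorant of $Gf$ that dominates all the others: since $Gf\ge-Gf_{-}$, I would first exhibit a harmonic minorant of the subharmonic function $-Gf_{-}$ — here is where subcriticality enters, providing a positive harmonic function against which the $G$-integrable potential $Gf_{-}$ can be controlled — and then form the greatest harmonic minorant of $Gf$ as the (harmonic) upper envelope of its now non-empty family of harmonic minorants, this envelope being $\le0$ by (b).

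The main obstacle is the monotone convergence $(H^{K_n})^{-1}(f1_{K_n})\uparrow Gf$ in part (c): this is precisely the point at which $G$-integrability of $f$ is needed, and it is what forces the Dirichlet solutions attached to a potential to collapse to the zero function. Once (c) is in hand the upper bound in (b) is soft, and the only delicate point left in (a) is the non-emptiness of the family of harmonic minorants of $Gf$, which again relies on the $G$-integrability hypothesis and cannot be dispensed with.
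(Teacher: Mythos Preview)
Your argument for (c) is correct and takes a genuinely different, more elementary route than the paper. The paper first proves $\ghm_{G(\cdot,y)}=0$ using that $G(\cdot,y)$ is the smallest non-negative $v$ with $Hv\ge 1_y$, then extends to $f\in\cC_c(X)$ by bounding $Gf\le C\,G(\cdot,y)$ via the Harnack inequality, and finally reaches general $f\ge0$ by writing $Gf=G(f-f_n)+Gf_n$ with $f_n=1_{K_n}f$ and invoking additivity of greatest harmonic minorants. Your identity $Gf-(Gf)_n=(H^{K_n})^{-1}(1_{K_n}f)$ combined with $(H^{K_n})^{-1}(1_{K_n}f)\uparrow Gf$ collapses all of this into a single monotone-convergence step and avoids both the minimality characterisation of the Green's function and the Harnack inequality. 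Your reduction in (b) from general $f$ to non-negative $f$ via $h+Gf_{-}\le Gf_{+}$ together with additivity/monotonicity is essentially the paper's own device for that step.

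There is, however, a genuine gap in your treatment of (a). Neither of the two steps you sketch is actually carried out, and both are problematic in principle. First, ``exhibit a harmonic minorant of $-Gf_-$'' is left as a promise; subcriticality supplies positive \emph{superharmonic} functions, but it does not obviously hand you a harmonic function majorising the potential $Gf_-$, and you give no mechanism for producing one. Second, even if the family of harmonic minorants of $Gf$ were known to be non-empty, its pointwise upper envelope is in general only subharmonic, not harmonic, so your construction of $\ghm_{Gf}$ as an envelope cannot work as written. The paper takes a much shorter path here: it simply records that $-Gf_-$ is itself a \emph{subharmonic} minorant of $Gf$ (since $H(-Gf_-)=-f_-\le 0$ and $Gf=Gf_+-Gf_-\ge -Gf_-$) and invokes Theorem~\ref{thm:propertiesGHM} directly, which manufactures the greatest harmonic minorant as a limit of Dirichlet solutions once a subharmonic minorant is in hand. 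So the intended route for (a) is not to search for a harmonic minorant at all, but to feed the subharmonic minorant $-Gf_-$ straight into Theorem~\ref{thm:propertiesGHM}.
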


For the proof of this theorem we need a local Harnack inequality which is well known in the context of graphs see \cite[Theorem 4.5]{KePiPo1} as well as \cite{HK11} and references therein.

\begin{proposition}[Harnack Inequality, Theorem 4.5 in \cite{KePiPo1}]\label{thm:harnackineq} Let $ H_{b,c,m} $ be non-negative, let $K\sse X$ be a connected and finite set and let $f\in\mathcal{C}(X)$. Then there exits a constant $C=C(f)>0$ such that the such that for any $u\in\mathcal{F}$, $u\geq 0,$ such that $(H-f)u\geq 0$ on $K$ we have
	\[ \max_K u \leq C \min_K u. \]
	The constant $C(f)$ can be chosen  monotonously in the sense that if $f\leq g$ then $ C(f) \geq C(g).$
\end{proposition}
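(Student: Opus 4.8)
The plan is to establish a neighbour-to-neighbour Harnack bound and then propagate it along paths inside the finite connected set $K$; this is the classical discrete argument and requires no heavy machinery. If $|K|=1$ there is nothing to prove, so assume $|K|\ge 2$; then connectedness of $K$ guarantees that every vertex of $K$ has a neighbour inside $K$. The starting point is a pointwise reformulation of the hypothesis: for $x\in K$, writing out $(H-f)u(x)\ge 0$ and multiplying by $m(x)>0$ gives
\[
D_f(x)\,u(x)\ \ge\ \sum_{y\in X}b(x,y)u(y),\qquad\text{where}\quad D_f(x):=\deg(x)+c(x)-m(x)f(x),
\]
the right-hand sum being finite since $u\in\mathcal F$. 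As $u\ge 0$ and $b\ge 0$, dropping all summands on the right but one yields, for every $y$ with $b(x,y)>0$,
\[
b(x,y)\,u(y)\ \le\ D_f(x)\,u(x).
\]

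Next I would dispose of the degenerate case and, in passing, obtain positivity of $D_f$ on $K$. Suppose $\min_K u=0$, say $u(x_0)=0$ with $x_0\in K$. The first display at $x_0$ forces $\sum_{y}b(x_0,y)u(y)\le 0$, hence $u(y)=0$ for every neighbour $y$ of $x_0$; applying the same identity successively at the vertices of $K$ reachable by paths in $K$ propagates this to all of $K$, so $u\equiv 0$ on $K$ and the asserted inequality holds with any $C>0$. We may therefore assume $u>0$ on $K$. For such $u$ and any $x\in K$ the right-hand side of the first display is strictly positive (it dominates $b(x,y)u(y)>0$ for a neighbour $y\in K$), while $u(x)>0$; hence $D_f(x)>0$ for every $x\in K$.

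It then remains to fix the constant and chain. For each ordered pair of adjacent vertices $x,y\in K$ put $c_{x,y}(f):=\max\{1,\,D_f(x)/b(x,y)\}$, so $c_{x,y}(f)\ge 1$, let $C_0(f):=\max\{c_{x,y}(f): x,y\in K,\ b(x,y)>0\}$, a maximum over finitely many pairs, and set $C(f):=C_0(f)^{|K|-1}$. Assuming $u>0$ on $K$, the second display gives $u(y)\le c_{x,y}(f)\,u(x)\le C_0(f)\,u(x)$ for adjacent $x,y\in K$. Given arbitrary $x,y\in K$, connectedness provides a simple path $x=x_0,x_1,\dots,x_n=y$ inside $K$ with $b(x_{i-1},x_i)>0$ and $n\le|K|-1$; iterating the neighbour bound along it gives $u(y)\le C_0(f)^{n}u(x)\le C_0(f)^{|K|-1}u(x)$. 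Taking $y$ where $u$ is maximal on $K$ and $x$ where it is minimal yields $\max_K u\le C(f)\min_K u$. For the monotonicity, if $f\le g$ then $D_f(x)\ge D_g(x)$ for every $x$ (because $m(x)>0$), hence $c_{x,y}(f)\ge c_{x,y}(g)$, so $C_0(f)\ge C_0(g)$ and $C(f)\ge C(g)$.

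The one point that needs a little care is the sign bookkeeping around $D_f$: a priori $D_f(x)$ may fail to be positive, and one must notice that this can only happen in the trivial situation $u\equiv 0$ on $K$, which is precisely why truncating the neighbour constant at $1$ does no harm. Once this is granted, the path-chaining and the monotonicity of $C(f)$ are entirely routine.
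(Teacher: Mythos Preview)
The paper does not actually prove this proposition; it is quoted from \cite{KePiPo1} (Theorem~4.5 there) and merely stated for later use, so there is no in-paper proof to compare against. Your argument is the standard discrete Harnack chain: rewrite $(H-f)u\ge 0$ at $x\in K$ as $D_f(x)u(x)\ge\sum_y b(x,y)u(y)$, extract a neighbour bound $u(y)\le (D_f(x)/b(x,y))\,u(x)$, and iterate along paths in the finite connected set $K$. This is correct, and it is essentially the argument one finds in \cite{KePiPo1} and in the references cited there (e.g.\ \cite{HK11}).

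Two minor remarks. First, your handling of the sign of $D_f$ is fine but worth making explicit: the constant $C(f)$ is defined via the truncation $c_{x,y}(f)=\max\{1,D_f(x)/b(x,y)\}$ and hence is well-defined and $\ge 1$ for every $f$, independently of any $u$; the positivity $D_f(x)>0$ is only invoked, for a given $u>0$ on $K$, to justify $u(y)\le (D_f(x)/b(x,y))u(x)\le c_{x,y}(f)u(x)$. Second, your proof nowhere uses the standing hypothesis that $H_{b,c,m}$ is non-negative on $\mathcal C_c(X)$; this is consistent with the fact that the Harnack inequality, as a conditional statement about nonnegative supersolutions, does not require it, though in the paper the hypothesis is in force throughout.
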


We recall from the discussion
in Section~\ref{thm:GHM} that $ G(\cdot,y) $ is the smallest $ v\ge0 $ such that $ Hv =1_y$, $ y\in X $.

\begin{proof} [Proof of Theorem~\ref{thm:GHM}]	
For a function $ f ,$ we let $ f=f_{+}-f_{-} $, where $ f_{\pm}= \max\{\pm f,0\} $.
	
Ad~(a):
Let $ f\in \mathcal{G} $ be given. Then, the function $ -Gf_{-}$ is a subharmonic minorant to $  G f $ since $ G f=Gf_{+}-Gf_{-}\ge -Gf_{-} $ and $ H(-Gf_{-}) =-f_{-} \leq 0$. Hence, the greatest harmonic minorant of $ Gf $ exists by Theorem~\ref{thm:propertiesGHM}.

Ad~(b) for $ f\in \mathcal{C}_{c}(X) $: We show the statement for $ f=1_{y} $, $ y\in X $,  first and prove the statement for $ f\in \mathcal{C}_{c} (X)$ afterwards. The statement for general $ f\in \mathcal{G} $ is then proven after we have shown (c).

 Let $y\in X$. We note that $0$ is a harmonic minorant of the positive superharmonic Green's function $G(\cdot,y)$ and let $u$ be an arbitrary harmonic minorant of $G(\cdot,y)$. Then $H(G(\cdot,y)-u)=1_y$. Since $ G(\cdot,y) $ is the smallest solution $ v\ge0 $ to $ Hv=1_y $, it follows that $G(\cdot,y)-u\geq G(\cdot,y)$. Hence, $u\leq 0$ and $\ghm_{G(\cdot,y)}=0$.

Now, let $K$ be a connected and finite set such that $\supp(f)\sse K$.
Since the functions $G(x,\cdot)  $ and $ G(\cdot,x) $ are positive and superharmonic for all $ x,y\in K $ and $ K  $ is finite, we obtain by the Harnack inequality, Proposition~\ref{thm:harnackineq},  the existence of a constant $C> 0$ such that $$ G(x,z)\leq C\cdot G(x,y) $$ for all $x,y,z\in K$. 
This implies for $x,y\in K$
\[Gf(x)\leq \max_{z\in K}|f(z) |\sum_{z\in K}G(x,z)\leq C \cdot \# K \cdot \max_{z\in K}|f(z)| G(x,y).\] 
Let $ u $ be the harmonic minorant of $ Gf $. Then, $ u $ is a harmonic minorant of $ G(\cdot,y) $ and, therefore, $ u\leq \ghm_{G(\cdot,y)} $. But above  we have shown $\ghm_{G(\cdot,y)}=0$.

Ad~(c): Since $ f\ge0 $, we have $ Gf\ge0 $ and $ Gf $ is superharmonic. Therefore, $ \ghm_{Gf} $ exists as $ 0 $ is a harmonic minorant for $ Gf $.
Let $(K_n)$ be an increasing exhaustion of $X$ with finite sets and define $f_n=1_{K_n}f$, $n\in\mathbb{N}$. By (b), for $  \mathcal{C}_{c}(X) $, we know that $\ghm_{Gf_n}\leq 0$ and since $ 0 $ is a harmonic minorant we have $ \ghm_{Gf_{n}}=0 $.  Since $Gf, Gf_n, G(f-f_n)$ are non-negative superharmonic functions and $ Gf= G(f-f_n)+Gf_{n}$, we can use the the additivity of greatest harmonic minorants 
to get
\begin{align*}
0\leq \ghm_{Gf}=\ghm_{G(f-f_{n})}+\ghm_{Gf_{n}}=\ghm_{G(f-f_{n})}\leq G(f-f_{n})
\end{align*}
By monotone convergence, $G(f-f_n)\to 0$ as $n\to \infty$. So, we conclude that $\ghm_{Gf}=0$.  

Ad~(b) for general $ f\in \mathcal{G} $: Let $ f\in \mathcal{G} $. By (a) the greatest harmonic minorants of $ Gf_{+} $ and $ G(-f_{-})=-Gf_{-}$ exist and by (c) we have  $ \ghm_{Gf_{+}}=0 $. By additivity of the greatest harmonic minorants, Theorem~\ref{thm:propertiesGHM},  we have
\begin{align*}
\ghm_{Gf}=\ghm_{Gf_{+}}+\ghm_{-Gf_{-}}=\ghm_{-Gf_{-}}\leq {-Gf_{-}}\leq 0.
\end{align*}
This finishes the proof.
\end{proof}
Theorem~\ref{thm:GHM} states that any potential has a greatest harmonic minorant. Recall that the Riesz decomposition theorem, Theorem~\ref{thm:rieszdecompGHM}, says that any superharmonic function with subharmonic minorant can be decomposed into a potential part with non-negative charge and a harmonic part. Hence, the greatest harmonic minorant of this potential part is the zero function.

\section{Proofs of the Main Results}\label{s:proofs}
\subsection{Proof of the Riesz Decomposition}
%
We next prove one of the main theorems, the Riesz decomposition, Theorem~\ref{thm:rieszdecompGHM}.

\begin{proof}[Proof of Theorem~\ref{thm:rieszdecompGHM}]
Firstly, we assume that $Hs\in \cG$ (and we show below that this is always the case). Then $GHs$ is a non-negative superharmonic function and we can apply $H$ to it. We show that  
\[u=s-GHs\]
is the greatest harmonic minorant of $ s $.
We have $Hu=H(s-GHs)=0$, so $u$ is a harmonic  function. Moreover, by Theorem~\ref{thm:GHM}, we have $\ghm_{GHs}=0$. Thus, using additivity of the greatest harmonic minorants  we get 
\[ u=\ghm_u=\ghm_u+\ghm_{GHs}=\ghm_{u+GHs}=\ghm_ s.\]
Hence, $ s_{h}= (s-GHs)$ and $ s_{p} =GHs$,  which shows the existence of the decomposition. 

As for the uniqueness, let $ s=Gf+h $ be another decomposition with $ f\in \mathcal{G} $ and $ h $ harmonic. Then, $ \ghm_{s}-h=G(f-Hs) $ is harmonic and therefore,
\begin{align*} 
0=H(\ghm_{s}-h)=HG(f-Hs)=f-Hs. 
\end{align*} 
We infer $ f=Hs $ which readily implies $  h=\ghm_{s}$.

Furthermore,  $s\geq 0$ implies $\ghm_{s}\geq 0$ since $ 0 $ is a harmonic minorant in this case. Moreover, by Theorem~\ref{thm:GHM} we get that $s$ is the potential of a non-negative charge if and only if $\ghm_{s}=0$ and the theorem for $Hs\in\cG$ is proven.

To finish the proof we show that $Hs\in\cG$. The  idea is to find an upper bound for $GHs$. Let $(K_n)$ be an increasing exhaustion of $X$ with finite sets. Since we assumed that $ s $ has a subharmonic minorant,  $ \ghm_{s} $ exists by  Theorem~\ref{thm:propertiesGHM}. Consider the function
\begin{align*}
v_{n}=s-\ghm_{s}-G(1_{K_{n}}Hs)
\end{align*}
for $ n\in\mathbb{N} $. Since
\begin{align*}
Hv_{n}=Hs-1_{K_{n}}Hs\ge0,
\end{align*}
and $ v_{n} $ has a subharmonic minorant with $-G(1_{K_{n}}Hs  )$, the greatest harmonic minorant $ \ghm_{v_{n}} $ exists  by  Theorem~\ref{thm:propertiesGHM} and
\begin{align*}
-\ghm_{v_{n}}\leq G(1_{K_{n}}Hs  ).
\end{align*}
Since the greatest harmonic minorant of a potential is non-positive, Theorem~\ref{thm:GHM}, we have $- \ghm_{v_{n}}\leq0 $ and, therefore,
$ \ghm_{v_{n}}\ge0 $.
Thus, we obtain
\begin{align*}
G(1_{K_{n}}Hs)=s-\ghm_{s}-v_{n}\leq s-\ghm_{s}-\ghm_{v_{n}}\leq s-\ghm_{s}.
\end{align*}
Monotone convergence yields $ GHs\leq s-\ghm_{s} $ and, therefore, $ Hs\in \mathcal{G} $. This finishes the proof.
\end{proof}

\subsection{Proof of the Riesz-Mokobodzki-Sibony Decomposition}
Next, we prove the Riesz-Mokobodzki-Sibony decomposition. It states that if positive superharmonic functions $ s,s_{1},s_{2} $ satisfy $s\leq s_{1}+s_{2}  $, then $ s=r_{1}+r_{2} $ with  superharmonic functions $0\leq r_{1}\leq s_{1} $ and $0\leq r_{2}\leq s_{2} $. The proof is inspired by the one of  \cite[Theorem~4.6.9]{HelmsNeu} in the continuum setting of $ \mathbb{R}^{n} $.

We start the proof with two simple observations which will be used in the proof. The first lemma, Lemma~\ref{lem:deg+q<=0}, uses the existence of a strictly positive superharmonic function which follows directly from the Allegretto-Piepenbrink theorem, see \cite[Theorem 4.2]{KePiPo1}. 
\begin{lemma}\label{lem:deg+q<=0}
	Let $H=H_{b,c,m}$ be non-negative on $\mathcal{C}_c(X)$. Then $\deg + c> 0$. 
\end{lemma}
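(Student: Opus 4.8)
The plan is to exploit the only piece of information we have, namely that $H_{b,c,m}$ is non-negative on $\cC_c(X)$, via the Allegretto--Piepenbrink theorem (\cite[Theorem~4.2]{KePiPo1}): non-negativity guarantees the existence of a function $w\in\cF$ with $w>0$ on $X$ and $Hw\ge0$ on $X$. I would then simply write out the superharmonicity inequality at a fixed vertex $x\in X$ and multiply by $m(x)>0$, obtaining
\[
\sum_{y\in X}b(x,y)\bigl(w(x)-w(y)\bigr)+c(x)w(x)\ge0,
\]
where the sum converges absolutely because $w\in\cF$. Rearranging this, and recalling $\deg(x)=\sum_{y}b(x,y)$, gives
\[
\bigl(\deg(x)+c(x)\bigr)w(x)\ \ge\ \sum_{y\in X}b(x,y)w(y).
\]

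It remains to observe that the right-hand side is strictly positive. Since $X$ is connected (and infinite, hence has more than one vertex), every vertex $x$ has at least one neighbour, i.e.\ there is $y_0\in X$ with $b(x,y_0)>0$; as $w(y_0)>0$, the (non-negative) sum $\sum_{y}b(x,y)w(y)$ is strictly positive. Therefore $\bigl(\deg(x)+c(x)\bigr)w(x)>0$, and dividing by $w(x)>0$ yields $\deg(x)+c(x)>0$. Since $x\in X$ was arbitrary, $\deg+c>0$ on $X$.

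There is really no serious obstacle here: the statement is a direct unwinding of the definition of superharmonicity applied to the positive superharmonic function supplied by Allegretto--Piepenbrink, and the only point requiring a (one-line) argument is the strict positivity of $\sum_y b(x,y)w(y)$, which is where connectedness of the graph enters.
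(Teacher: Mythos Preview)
Your proof is correct and matches the paper's argument essentially verbatim: invoke Allegretto--Piepenbrink to get a strictly positive superharmonic $s$, rewrite $Hs\ge0$ as $(\deg+c)s(x)\ge\sum_y b(x,y)s(y)$, and note the right-hand side is strictly positive by connectedness and positivity of $s$. The paper is slightly terser about why the sum is strictly positive, but the content is identical.
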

\begin{proof}
	By the Allegretto-Piepenbrink theorem  there exists a strictly positive superharmonic function $s$. Then $Hs\geq 0$ implies for all $x\in X$ that
	\[   \bigl( \deg(x)+c(x)\bigr)s(x)\ge \sum_{y\in X}b(x,y)s(y)> 0\]
	by the strict positivity of $ s $.
\end{proof}
The next lemma is well known, but we include the short proof for the convenience of the reader.
\begin{lemma}\label{lem:infsuberharmonic}  Let $H=H_{b,c,m}$ be non-negative on $\mathcal{C}_c(X)$. Let $T$ be a subset of the set of non-negative superharmonic functions. Then, the function   \[ {r(x)=\inf_{t\in T} t(x)},\qquad {x\in X}, \] is a non-negative superharmonic function.
\end{lemma}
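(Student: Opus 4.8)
The plan is to verify directly that $r=\inf_{t\in T}t$ lies in $\mathcal{F}$ and satisfies $Hr\ge 0$ pointwise. The first step is to check that $r$ is a genuine (finite-valued) function on $X$: each $t\in T$ is non-negative, so $r\ge 0$, and since $T$ is nonempty (otherwise the infimum is $+\infty$ and the statement is vacuous, but implicitly $T$ is assumed nonempty) we have $r(x)\le t_0(x)<\infty$ for any fixed $t_0\in T$ and every $x$. Hence $0\le r<\infty$ on $X$.

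Next I would check $r\in\mathcal{F}$, i.e.\ that $\sum_{y}b(x,y)r(y)<\infty$ for every $x$. This follows because $0\le r\le t_0$ and $t_0\in\mathcal{F}$ (being superharmonic, hence in $\mathcal{F}$ by definition of superharmonicity), so $\sum_y b(x,y)r(y)\le\sum_y b(x,y)t_0(y)<\infty$. Thus the expression $Hr(x)$ is well-defined for every $x\in X$.

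The core step is the superharmonicity inequality. Fix $x\in X$. For each $t\in T$, superharmonicity gives
\begin{align*}
0\le Ht(x)=\frac{1}{m(x)}\Bigl((\deg(x)+c(x))t(x)-\sum_{y\in X}b(x,y)t(y)\Bigr),
\end{align*}
so that
\begin{align*}
(\deg(x)+c(x))t(x)\ge\sum_{y\in X}b(x,y)t(y)\ge\sum_{y\in X}b(x,y)r(y),
\end{align*}
using $t\ge r$ and $b\ge 0$ in the last step. Now $\deg(x)+c(x)>0$ by Lemma~\ref{lem:deg+q<=0}, so dividing and taking the infimum over $t\in T$ on the left yields
\begin{align*}
(\deg(x)+c(x))r(x)\ge\sum_{y\in X}b(x,y)r(y),
\end{align*}
which is exactly $Hr(x)\ge 0$ after multiplying by $1/m(x)>0$. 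Since $x$ was arbitrary and $r\ge 0$, $r$ is a non-negative superharmonic function.

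The only place requiring a little care — the ``main obstacle,'' though it is mild — is the handling of the infinite sum over $y$: one must know a priori that $\sum_y b(x,y)t(y)$ converges absolutely so that the inequality $\sum_y b(x,y)t(y)\ge\sum_y b(x,y)r(y)$ is legitimate term-by-term, and that $r\in\mathcal{F}$ so $Hr$ makes sense; both are secured by the domination $0\le r\le t\le t_0$ with $t_0\in\mathcal{F}$. The positivity of $\deg+c$ from Lemma~\ref{lem:deg+q<=0} is what lets us pass from the rearranged inequality to $Hr\ge 0$ after taking the infimum, since we need the coefficient of $t(x)$ to be positive to preserve the direction of the inequality under $\inf$.
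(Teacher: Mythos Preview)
Your proof is correct and follows essentially the same approach as the paper: rearrange the superharmonicity inequality for each $t\in T$ using $\deg+c>0$ (Lemma~\ref{lem:deg+q<=0}), bound the sum via $t\ge r$, and take the infimum over $t$. You are somewhat more explicit about why $r\in\mathcal{F}$ and about the role of the sign of $\deg+c$, but the argument is the same.
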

\begin{proof}
	Let $t\in T$ be fixed and $r$ as stated above. It is obvious that $r\geq 0$ and $r\in \mathcal{F}$. So, it remains to show that $Hr\geq 0$. By Lemma~\ref{lem:deg+q<=0} we have $\deg +c> 0$ and  $Ht\ge0$ implies 
	\[ t(x)\geq \frac{1}{\deg(x)+c(x)}\sum_{y\in X}b(x,y)t(y)\geq \frac{1}{\deg(x)+c(x)}\sum_{y\in X}b(x,y)r(y) \]
	for all $x\in X$. Taking the infimum over all $t\in T$ on the right-hand side yields  $Hr\geq 0$.
\end{proof}

\begin{proof}[Proof of Theorem~\ref{thm:MS}] We first show existence and then uniqueness.\medskip
	
\emph{Existence:}  We define 
 \[\cS_1=\set{v\geq 0\colon v\text{ is superharmonic and } s-s_2\leq v} \quad \text{ and } \quad r_1=\inf\cS_1,\]
 as well as
 \[\cS_2=\set{v\geq 0\colon v\text{ is superharmonic and }s-r_1\leq v } \quad \text{ and } \quad r_2=\inf\cS_2,\]
 where the infimums are taken pointwise. 	By Lemma~\ref{lem:infsuberharmonic}, the functions $ r_{1} $ and $ r_{2} $ are non-negative superharmonic and, thus, $ r_{1}\in \mathcal{S}_{1} $ and $ r_{2}\in \mathcal{S}_{2} $.\medskip
	
We show $ s=r_{1}+r_{2} $ by proving several claims.	\medskip
	
\emph{Claim 1. We have $s\leq r_{1}+r_{2}$.}\\\emph{Proof of the claim.}
This follows directly from  $ r_{2}\in \mathcal{S}_{2} $.\medskip

It remains to show that $s\geq r_1+r_2$. This is done in several steps.\medskip

\emph{Claim 2. If $s-r_2$ is superharmonic, then $ s\ge r_{1}+r_{2} $.}\\\emph{Proof of the claim.} Since $s\in \mathcal{S}_{2} $, we obtain $$  s-r_{2}\geq 0 . $$ On the other hand since  $ r_{1}\in \mathcal{S}_{1} $ we have $ s-s_{2}\leq r_{1} $ and, therefore, $ s- r_{1}\leq s_{2} $ which yields $ s_{2}\in \mathcal{S}_{2}. $  We obtain $ r_{2}\le s_{2} $ and therefore,
\begin{align*}
s-s_{2}\leq s-r_{2}
\end{align*}
So, if $ s-r_{2} $ is superharmonic, then $ s-r_{2}\in \mathcal{S}_{1} $. As a consequence, $ s-r_{2}\ge r_{1} $ which yields
\begin{align*}
s\ge r_{1}+r_{2}.
\end{align*}
This proves the claim.\medskip

To prove that $ s-r_{2} $ is superharmonic  we need the following notation. Recall that  $\deg+ q> 0$  by Lemma~\ref{lem:deg+q<=0}.
For $ f\in \mathcal{F} $ and $ x\in X $, the function
\begin{align*}
f_x(z)=\begin{cases}\frac{1}{\deg(x)+c(x)}\sum_{y\in X}b(x,y)f(y) &: z=x,\\ \quad f(z)&: z\neq x. \end{cases}
\end{align*}
solves Dirichlet problem on $ \{x\} $ with respect to $ f $ which can be seen by the following direct calculation
\begin{align*}
Hf_{x}(x)=\bigl(\deg(x)+c(x)\bigr)f_{x}(x)- \sum_{x\in X}b(x,y)f_{x}(y)= 0,\qquad x\in X.
\end{align*}
For what follows, we write $r_{1,x}=(r_1)_x$ and $ r_{2,x}=(r_{2})_{x} $.

\textit{Claim 3. We have  $r_{2}-r_{2,x}\leq s-s_{x}$.} \\\emph{Proof of the claim.} Consider the following auxiliary function
\[  w= \min\set{r_2, r_{2,x}+s-s_{x}}.\]
Note that $ w=r_{2} $ on $ X\setminus\{x\} $ as $ r_{2}=r_{2,x} $ and $ s=s_{x} $ outside of $ x $. Clearly, $ w\leq r_{2} $. So if we show that $ w\in \mathcal{S}_{2} $, then $ w=r_{2} $ and the claim follows.
First of all, since the minimum of superharmonic functions is superharmonic, Lemma~\ref{lem:infsuberharmonic}, we have $ Hw\ge0 $. Furthermore, $ w\ge0 $ since $r_{2}\ge r_{2,x}\ge0 $
and $ s\ge s_{x} $ by Lemma~\ref{lem:dirichletProblem} and  Corollary~\ref{cor:dirichletProblem}. We are left to show that $ s-r_{1}\leq w $. On the one hand, $ s-r_{1}\leq r_{2} $ by Claim 1. Moreover, this inequality $s\leq r_1+r_2$, implies by Lemma~\ref{lem:dirichletProblem}
\[s_x\leq \bigl(r_1+r_2\bigr)_{x}=r_{1,x}+r_{2,x}.\]
Now, combining this inequality with $r_1\geq r_{1,x}$, Corollary~\ref{cor:dirichletProblem}, we get
\[ r_1+ r_{2,x}+s-s_{x} \geq r_{1,x}+ r_{2,x}+s-s_{x}\geq s.\]
Thus, we have $s-r_{1}\leq w$ which finishes the proof of the claim.\medskip

\textit{Claim 4: The function $s-r_2$ is superharmonic.} \\\emph{Proof of the claim.}
By Claim 3, $ r_{2}-r_{2,x}-s_{x}\leq s $ for all $ x\in X $ and, therefore,
\[s(x)-r_2(x)\geq s_{x}(x)-r_{2,x}(x)= (s-r_2)_{x}(x)= \frac{1}{\deg(x)+c(x)}\sum_{y\in X}b(x,y)(s-r_2)(y).\]
This shows that $ s-r_{2} $ is superharmonic in $ x. $  Applying this argument to every $x\in X$ we get that $s-r_2$ is superharmonic on $X$.
\medskip

In summary Claim 1 shows $ s\leq r_{1}+r_{2} $ and Claim~4 combined with Claim~2 yields $ s\ge r_{1}+r_{2} $. Hence, $ s=r_{1}+r_{2} $ and the proof of existence is finished.\medskip

We finally turn to uniqueness.\medskip

\emph{Uniqueness:} 
	Assume that there are non-negative superharmonic functions $t_1$ and $t_2$ for  which $t_1\leq s_1$, $t_2\leq s_2$ and $s=t_1+t_2$. Then $s\leq t_1 + s_2$ and $s\leq s_1+ t_2$. Hence, $ t_{1}\in \mathcal{S}_{1} $ and $ t_{2}\in \mathcal{S}_{2} $ which readily gives 	 $r_1\leq t_1$ and $r_2\leq t_2$. Since $r_1+r_2=t_1+t_2$, we get $r_1=t_1$ and $r_2=t_2$.
\end{proof}

\section{Representations of the Harmonic and the Potential Part}\label{sec:rep}
In this section we present a representation of the harmonic and the potential part of the Riesz decompostion. This representation is inspired by the corresponding result in the context of random walks, confer \cite{WoessRandom,WoessMarkov}. The validity of such a result is in this sense surprising as the semigroups of Schrödinger operators do not allow for a probabilistic interpretation in the case of non-positive $ c $. However, the main idea is to use a ground state transform. This way we get a decomposition to transfer the corresponding decomposition of the random walk context into the context of Schrödinger operators.

In this subsection let $ b $  and $ c $ be such that $H =H_{b,c,m} $ is non-negative. Hence,   $\deg + c > 0$ by Lemma~\ref{lem:deg+q<=0}. We define the function $p\colon X\times X \to (0,\infty)$ via 
\[p(x,y)=\frac{b(x,y)}{\deg(x)+c(x)} \] 
and the operator $P=P_{b,c}\colon \mathcal{F}\to \mathcal{C}(X)$ is defined by \[Pf(x)=\sum_{y\in X} p(x,y)f(y) \] for all $x\in X$.
In the setting of random walks, i.e., $c\geq 0$, the function $p$ is called the \emph{transition matrix} and the operator $P$ is called the \emph{transition operator}, \cite{WoessMarkov}.  

We consider the multiplication operator $D=D_{b,c,m}\colon \mathcal{C}(X) \to \mathcal{C}(X)$ defined via 
\[Df(x)=\frac{\deg(x)+c(x)}{m(x)} f(x)\]  which is invertible since  $\deg + c > 0$ by Lemma~\ref{lem:deg+q<=0}. We denote the inverse by $ D^{-1} $.
Note that $P, D, D^{-1}$ are positivity preserving and we have on $\mathcal{F}$
\[ H=D(I-P).\]
Letting $ n=\deg+c $, we readily see that $ I-P $ is a Schrödinger operator such that $ (I-P)=H_{b,c,n} $.
Obviously, for $ s\in \mathcal{F} $, we have $ Hs=0 $ (respectively $ Hs\ge0 $, $ Hs\leq0 $) if and only if $ (I-P )s\ge0 $ (respectively $ (I-P )s\ge0 $, $ (I-P )s \leq0 $).

In the Riesz decomposition theorem, Theorem~\ref{thm:rieszdecompGHM}, we have shown that a superharmonic function $ s $ with subharmonic minorant decomposes uniquely as
\begin{align*}
s=GHs+\ghm_{s}
\end{align*}
with  potential $ GHs $  and  harmonic part $\ghm_{s}$. Next, we give the main result of this section, an alternative representation of $ GHs$ and $ \ghm_{s}$.
Furthermore, fixing the measure $ m $, we denote the operator of multiplication by the function $ g m $ by $ M_{g} $. Specifically, we will use the operator
\begin{align*}
M_{s^{2}}f =  s^2m f.
\end{align*}

\begin{theorem}[Representation of the Harmonic and the Potential Part]\label{thm:repHarPot}
	Let $H_{b,c,m}$ be  subcritical. If $s>0$ is superharmonic, then  
\begin{align*}
GHs &= M_{s^{2}}DG M_{s^{-2}} (I-P)s=(M_{s^{2}}D)G(M_{s^{2}}D)^{-1}Hs \end{align*} 
and
\begin{align*}
\ghm_{s}&= \lim_{n\to\infty}P^ns.
\end{align*}	
	In particular, the  limit in the second equality exists. 
\end{theorem}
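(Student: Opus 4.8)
The plan is to read the statement through the factorization $H=D(I-P)$, which is available on $\mathcal{F}$ since $\deg+c>0$ by Lemma~\ref{lem:deg+q<=0}, and to recall that for $s\in\mathcal{F}$ one has $Hs\ge 0$ if and only if $(I-P)s\ge 0$, i.e.\ $Ps\le s$. Using that $s>0$, $s\in\mathcal{F}$, and that $P$ is given by a nonnegative kernel with $0\le Ps\le s$, a routine induction shows that $(P^{n}s)_{n\ge 0}$ is a pointwise decreasing sequence in $\mathcal{F}$ with $0\le P^{n}s\le s$. Hence the pointwise limit $h:=\lim_{n\to\infty}P^{n}s$ exists with $0\le h\le s$ and $h\in\mathcal{F}$, which in particular proves that the limit in the second displayed identity exists. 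Moreover, dominated convergence — with dominating function $s$, which is $p(x,\cdot)$-summable because $\sum_{y}p(x,y)s(y)=Ps(x)<\infty$ — gives $Ph=\lim_{n}P^{n+1}s=h$, so $(I-P)h=0$ and therefore $Hh=D(I-P)h=0$; thus $h$ is a harmonic minorant of $s$.

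Next I would identify $h$ with $\ghm_{s}$. Since $0$ is a subharmonic minorant of the positive superharmonic function $s$, the greatest harmonic minorant $\ghm_{s}$ exists by Theorem~\ref{thm:propertiesGHM}, and the first paragraph yields $h\le\ghm_{s}$. For the reverse inequality, $\ghm_{s}$ is harmonic, so $P\ghm_{s}=\ghm_{s}$ (because $D$ is an injective multiplication operator), and iterating the order-preserving map $P$ on the inequality $\ghm_{s}\le s$ gives $\ghm_{s}=P^{n}\ghm_{s}\le P^{n}s$ for every $n$, hence $\ghm_{s}\le h$. Therefore $\ghm_{s}=h=\lim_{n\to\infty}P^{n}s$. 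Combining this with the Riesz decomposition, Theorem~\ref{thm:rieszdecompGHM}, we get $GHs=s-\ghm_{s}=\lim_{n\to\infty}(s-P^{n}s)$, and the telescoping identity $s-P^{n}s=\sum_{k=0}^{n-1}P^{k}(s-Ps)=\sum_{k=0}^{n-1}P^{k}\bigl((I-P)s\bigr)$, whose summands are nonnegative because $(I-P)s=D^{-1}Hs\ge 0$, shows $GHs=\sum_{k\ge 0}P^{k}\bigl((I-P)s\bigr)$.

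It remains to recast $GHs$ in the stated operator form, and here the ground state transform is the natural device: conjugating by $s$ replaces $P$ by $\widehat P:=M_{1/s}PM_{s}$, which is genuinely sub-Markovian since $\widehat P1=Ps/s\le 1$, and replaces $H$ by the Schrödinger operator $\widetilde H=M_{1/s}HM_{s}$ with nonnegative potential; one has $P^{n}=M_{s}\widehat P^{\,n}M_{1/s}$, the operator $\widetilde H$ is again subcritical with $\widetilde G(x,y)=\tfrac{s(y)}{s(x)}G(x,y)$, and the constant function $1$ is superharmonic for $\widetilde H$. In this picture the series $\sum_{k\ge0}P^{k}\bigl((I-P)s\bigr)$ is the image under conjugation of the classical Riesz decomposition of $1$ for the transient chain $\widehat P$, and unwinding the definitions — in particular $H=D(I-P)$ — turns it into $GHs=(M_{s^{2}}D)G(M_{s^{2}}D)^{-1}Hs$, while a direct manipulation then shows this equals $M_{s^{2}}DGM_{s^{-2}}(I-P)s$. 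The genuine obstacle I anticipate is conceptual rather than computational: $P$ itself need not be sub-Markovian — precisely when $c$ takes negative values — so the classical transient-random-walk Riesz decomposition cannot be applied to $P$ directly, which is exactly why one routes everything through the honestly sub-Markovian $\widehat P$ (under which $s$ becomes the constant $1$). The remaining care is bookkeeping: keeping each iterate in $\mathcal{F}$, justifying the interchanges of limit and summation via the domination $P^{n}s\le s$ and $Ps<\infty$, and noting that harmonicity is unaffected by the positive multiplication operator $D$.
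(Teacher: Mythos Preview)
Your argument is correct, and for the operator identity you follow essentially the paper's route: both of you pass through the ground state transform (your $\widehat P=M_{1/s}PM_{s}$ is exactly the paper's $P^{s}$), use the series representation of the Green's function available once the potential is nonnegative, and then unwind the conjugation to land on $(M_{s^{2}}D)G(M_{s^{2}}D)^{-1}Hs$. Your ``unwinding the definitions'' step would, when written out, look just like the paper's computation with $H_{s}=M_{s^{2}}H$, $G_{s}=GM_{s^{2}}^{-1}$, $D_{s}=M_{s^{2}}D$.

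Where you genuinely diverge is in proving $\ghm_{s}=\lim_{n}P^{n}s$. The paper first establishes a separate lemma (the $c\ge 0$ case, Lemma~\ref{lem:rieszdecompq+deg>0}) showing that for nonnegative potential the iterates $P^{n}s$ yield the Riesz decomposition, and then transports this via the ground state transform to the constant function $1$, finally arguing that the greatest harmonic minorant of $1$ for $H^{s}$ corresponds to that of $s$ for $H$. Your sandwich argument bypasses all of this: once $h:=\lim_{n}P^{n}s$ is shown to be a harmonic minorant of $s$ you get $h\le\ghm_{s}$, and iterating $P$ on $\ghm_{s}\le s$ together with $P\ghm_{s}=\ghm_{s}$ gives $\ghm_{s}\le h$. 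This is more elementary and works directly for arbitrary $c$, with no detour through the transformed problem; the price is only that the operator identity for $GHs$ still needs the ground state transform, since the Neumann-series representation of $G$ via $\sum_{k}P^{k}$ is only available after one has arranged $c\ge 0$. Both approaches rely on the already proven Riesz decomposition (Theorem~\ref{thm:rieszdecompGHM}) --- you to read off $GHs=s-\ghm_{s}$, the paper to invoke uniqueness --- so neither is self-contained in that respect.
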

Note that if $H_{b,c,m}$ is subcritical there always exists $s>0$ superharmonic by the Allegretto-Piepenbrink theorem, see \cite[Theorem 4.2]{KePiPo1}.

The basic idea is to prove a corresponding version of the theorem in the case $ c\ge0 $. In this case a subcritical operator $ H_{b,c,m} $ is called transient. Furthermore, according to  \cite [Theorem~6]{Schmidt}, the Green's function of $ H=H_{b,c,1} $ satisfies for $ c\ge 0 $
\begin{align*}
G(x,y)=\frac{1}{\deg(x)+c(x)}\sum_{k=0}^\infty P^k1_y(x),
\end{align*}
for all $x,y\in X$.

The proof of the following theorem works along the lines of \cite[p. 169]{WoessMarkov}.

\begin{lemma}[Riesz Decomposition for  $c\geq 0$]\label{lem:rieszdecompq+deg>0}
	Let $b$ be a graph over $X$ and $c\geq 0$ such that $H_{b,c,1}$ is subcritical and let  $s$ be a superharmonic function with subharmonic minorant. Then, the monotone limit \[ s_{h}=\lim_{n\to\infty}P^ns \] exists pointwise and is harmonic, and \[  s_{p}=D^{-1}(s-s_{h})  \] is a potential with charge $  (I-P)s$.
	In particular, the Riesz decomposition of $ s $ with respect to the operator $  (I-P)$ is
	$ s=s_{h}+s_{p} $, and with respect to $H$ is $s=s_h+Ds_p$.
\end{lemma}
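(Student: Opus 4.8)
The plan is to reduce everything to the transience of the operator $I-P = H_{b,c,n}$ with $n = \deg+c$, for which the Green's function has the explicit series representation recalled just above, namely $G_{I-P}(x,y) = \sum_{k=0}^\infty P^k 1_y(x)$ (this is the $m=1$ version; more precisely the Green's function of $H_{b,c,1}$ equals $D^{-1}\sum_k P^k$, and $I-P = D^{-1}H$, so one has to be a little careful about which operator carries which measure — I will set this up cleanly at the start). First I would observe that subcriticality of $H_{b,c,1}$ is equivalent to subcriticality of $I-P$ since they differ only by the positive multiplication operator $D$, and that $s$ is superharmonic (resp.\ has a subharmonic minorant) for $H$ if and only if it is so for $I-P$, as noted in the excerpt. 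So it suffices to prove the Riesz decomposition statement for the operator $I-P$ and then translate: if $s = s_h + \tilde s_p$ is the decomposition for $I-P$ with $\tilde s_p$ a potential of $(I-P)s$, then since $H = D(I-P)$ and the Green's function $G = G_{b,c,1}$ of $H$ satisfies $G\,D = G_{I-P}$ (as $H^{-1} = (I-P)^{-1}D^{-1}$ on finite sets), one gets $\tilde s_p = G_{I-P}(I-P)s = G D (I-P)s = G Hs$, i.e.\ $\tilde s_p = GHs = s_p^{H}$; combined with $Ds_p = \tilde s_p$ this gives the two asserted forms.

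The core computation is then the probabilistic-style argument for $I-P$, following \cite[p.~169]{WoessMarkov}. Write $g = (I-P)s \ge 0$, a non-negative function since $s$ is superharmonic for $I-P$. The key telescoping identity is
\begin{align*}
s - P^{N+1}s = \sum_{k=0}^{N} P^k(I-P)s = \sum_{k=0}^{N} P^k g.
\end{align*}
Since $g \ge 0$ and $P$ is positivity preserving, the partial sums on the right are non-decreasing in $N$, hence $(P^{N+1}s)_N$ is non-increasing; and since $s$ has a subharmonic minorant $u$ (so $u \le P^k u \le P^k s$ — here one needs $P^k u \le P^k s$, which follows from $u\le s$ and positivity of $P$, together with $u \le Pu$ iterated), the sequence $P^{N}s$ is bounded below, so the pointwise monotone limit $s_h := \lim_{N\to\infty} P^N s$ exists and is finite. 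Monotone convergence gives $P s_h = s_h$, i.e.\ $(I-P)s_h = 0$, so $s_h$ is harmonic; and letting $N\to\infty$ in the telescoping identity yields $s - s_h = \sum_{k=0}^\infty P^k g = G_{I-P} g$, which is exactly saying $s - s_h$ is the potential (for $I-P$) with charge $g = (I-P)s$. Setting $s_p = D^{-1}(s-s_h)$ gives $D s_p = s - s_h = G_{I-P}(I-P)s = GHs$, so $s_p = GHs^{H}$-type potential with charge $Hs$ as claimed, and $s = s_h + s_p$ (for $I-P$) resp.\ $s = s_h + Ds_p$ (for $H$).

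The main obstacle I anticipate is bookkeeping rather than depth: keeping straight the three operators $H = H_{b,c,1}$, $I - P = H_{b,c,n}$, and the multiplication $D$, and in particular verifying the identity $G_{I-P} = G\,D$ relating their Green's functions (equivalently $(H^{K})^{-1} D = ((I-P)^{K})^{-1}$ on each finite $K$, then pass to the limit), so that "potential of $g$ for $I-P$" translates correctly to "potential of $Dg = Hs$ for $H$". A secondary point needing care is the lower bound $u \le P^N s$ used to guarantee finiteness of the limit: one should argue that a subharmonic minorant $u$ of $s$ for $I-P$ satisfies $u \le P u$, hence $u \le P^N u \le P^N s$ by monotonicity of $P$, which is where the hypothesis "subharmonic minorant" is genuinely used (exactly as in Theorem~\ref{thm:propertiesGHM}). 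Everything else — monotone convergence to pass $P$ through the limit, positivity preservation of $P, D, D^{-1}$ — is routine.
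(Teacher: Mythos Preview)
Your proposal is correct and follows essentially the same telescoping/monotone-limit argument as the paper, invoking the series representation $\sum_k P^k$ for the Green operator and passing $P$ through the limit by monotone convergence. The only organizational difference is that the paper first treats the case $s\ge 0$ (using $0$ as the lower bound) and then reduces the general case by applying this to $s-\ghm_s$ via Theorem~\ref{thm:propertiesGHM} and the uniqueness in Theorem~\ref{thm:rieszdecompGHM}, whereas you handle the general case in one stroke by bounding $P^{N}s$ below directly with the subharmonic minorant through $u\le P^{N}u\le P^{N}s$; this is a slightly more self-contained route but not a different idea.
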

\begin{proof}
	Assume first that $s$ is non-negative. Note that $Hs\geq 0$ implies $ (I-P) s\ge0$ and, therefore, $s\geq Ps$. Hence, the limit
	$s_{h}= \lim_{n\to\infty}P^n s $ exists due to monotonicity and the fact that $ P $ is  positivity preserving. Moreover, we clearly have $ Ps_{h}=s_h $ due to monotone convergence which implies $ Hs_{h}=0 $. Moreover, we have by harmonicity of $ s_{h} $, i.e., $ (I-P)s_{h}=0 $, the representation of the Green's function above and $(\sum_{k=0}^{\infty}P^{k})(I-P)=I $, that
	\[  G(I-P)s= G(I-P)(s-s_{h})=D^{-1}\Bigl(\sum_{k=0}^{\infty}P^{k}\Bigr)(I-P)(s-s_{h})=D^{-1}(s-s_{h}).\]
	Hence, we obtain for superharmonic $ s\ge0 $, that
	\begin{align*}
	s=\lim_{n\to\infty}P^ns + DG(I-P)s,
	\end{align*}
which is the Riesz decomposition of $s$ with respect to the operator $I-P$. Hence, $\lim_{n\to\infty}P^ns$ is the greatest harmonic minorant of $s$ with respect to $I-P$ and therefore also with respect to $H$. But this implies that $DG(I-P)s=GHs$ is the corresponding potential of the decomposition with respect to $H$.	
	
	Assume now that $s$ is superharmonic with subharmonic minorant but not necessarily non-negative. Then by Theorem~\ref{thm:propertiesGHM} the greatest harmonic minorant $\ghm_s$ exists and $s-\ghm_s$ is a non-negative superharmonic function. Applying the first part of the proof yields 
	\begin{align*}
	s-\ghm_s&=\lim_{n\to\infty}P^n(s-\ghm_s)+DG(I-P)(s-\ghm_s)\\
	&=\lim_{n\to\infty}P^ns-\ghm_s+DG(I-P)s,
	\end{align*}
	where the second equality can be justified as follows: First of all $ \ghm_{s} $ is harmonic and therefore $ \lim_{n\to\infty}P^{n}\ghm_{s}=\ghm_{s}=P\ghm_{s} $. Secondly, since all other involved terms are finite  we conclude that  $\lim_{n\to\infty}P^ns$ exists and the equality follows. 
	
	Since the Riesz decomposition is unique by Theorem~\ref{thm:rieszdecompGHM}, we obtain the result.
\end{proof}

With the lemma above we can now deduce the statement of Theorem~\ref{thm:repHarPot} by the virtue of the so called ground state representation.
\begin{proof}[Proof of Theorem~\ref{thm:repHarPot}] Let $s>0$ be superharmonic with respect to $H$.
	We denote
	\begin{align*}
	H_{s}=H_{b,c,s^{-2}}=M_{s^{2}}H.
	\end{align*}
	Clearly, $ H $ and $ H_{s} $ share the same (super/sub-)harmonic functions and it is not hard to see that 
	\begin{align*}
	G_{s}=GM^{-1}_{s^{2}}
	\end{align*}
	is the Green operator of $ H_{s} $. Furthermore, we set
	\begin{align*}
	D_{s}=M_{s^{2}}D \qquad \mbox{and}\qquad P_{s}=P.
	\end{align*}
	Then, $ H_{s}=D_{s}(I-P_{s}) $.
	One readily sees (confer \cite[Section~4.2]{KePiPo1}) that the corresponding ground state representation $ H^{s}= H_{b^{s},c^{s},1}$  of $ H_{s} $ with respect to $ s $ associated with the graph
	\[ b^s(x,y)=b(x,y)s(y)s(x),\quad \mbox{and} \quad c^s(x)=s^{-1}H_{s}s(x)=M_{s}Hs(x),\qquad x,y \in X, \] 
	acts on $\mathcal{F}^s={s}^{-1}\mathcal{F}$ and satisfies \[ H^{s}f={s}^{-1}H_{s}(sf).  \]
	Specifically, $ c^{s}\ge0 $. Moreover,  a function $ u $ is (super/sub-)harmonic for $ H_{s} $ if and only if  $ (s^{-1}u) $ is  (super/sub-)harmonic for $ H^{s} $. Hence, we are in a position to apply the previous lemma, Lemma~\ref{lem:rieszdecompq+deg>0}, to $ H^{s}=H_{b^{s},c^{s},1} $. 
	
	But before doing so we need to consider the Green operator of $ H^{s} $ first which acts on $\cG^s=s^{-1}\cG$ via \[ G^sf=s^{-1}G_{s}(sf).\]
	Moreover, on $\mathcal{F}^s$, respectively $\mathcal{C}(X)$ we set \[P^sf=s^{-1}P_{s}(sf), \quad\text{ respectively }\quad  D^sf=s^{-1}D_{s}(sf). \] The operators $P^s$ and $D^s$ are the corresponding transition operator and degree matrix of $H^s$, i.e., $ H^{s}=D^{s}(I-P^{s}) $.

 By assumption the superharmonic function $ s $ has a subharmonic minorant $ u $ for  $ H $ and $ H_{s} $.
	Then the constant function $1=s^{-1}s$ is superharmonic  with subharmonic minorant $ s^{-1}u $ for $ H^{s} $. Therefore, we can apply Lemma~\ref{lem:rieszdecompq+deg>0} to $1=s^{-1}s$ to get
	\[1=  D^sG^s(I-P^s)1+\lim_{n\to\infty}(P^s)^n1 \]
	which is equivalent to
	\[ s= D_sG_s(I-P_s)s+\lim_{n\to\infty}P^ns.  \]
	Since $ 1_{h}=\lim_{n\to\infty}(P^s)^n1 $ is harmonic for $ H^{s} $, the function  \[s_{h}=s 1_{h}=s\lim_{n\to\infty}(P^s)^n1 =\lim_{n\to\infty}P^ns \] is harmonic for $ H_{s} $. Furthermore, according to Lemma~\ref{lem:rieszdecompq+deg>0} the function $  1_{h}$ is the greatest harmonic minorant of $1 $ with respect to the operator $ I-P^{s} $ and, therefore, with respect to the operator $ H^{s} $. Hence, it  follows that $ s_{h} =s 1_{h} $ is the greatest harmonic minorant of $ s $ with respect to $ H_{s} $ and $ H $: Indeed if $ Hu=0 $ with $s\ge u\ge s_{h} $, then $ H^{s}(s^{-1}u)=0 $ with $1\ge s^{-1}u\ge s^{-1}s_{h}=1_{h}$. Thus, $ s^{-1}u $ is the greatest harmonic minorant of $ H^{s} $ and therefore, $ u=s 1_h $ and $s_{h}= s1_{h} $ is the greatest harmonic minorant of $ H_{s} $ (and H).
	We conclude
	\begin{align*}
	\ghm_{s}=s_{h}=\lim_{n\to\infty}P^ns.
	\end{align*}
	By the Riesz decomposition, Theorem~\ref{thm:rieszdecompGHM}, we infer that the potential part $ s_{p}=G_{s}H_{s}s $ of $ s $ with respect to $ H_{s} $ equals \[  s_{p}=D_sG_{s}(I-P_{s})s=D_{s}G_sD_{s}^{-1}H_{s}s  .\]
	Now plugging in the equalities $ 	H_{s}=H_{b,c,s^{-2}}=M_{s^{2}}H $, $ 	G_{s}=GM^{-1}_{s^{2}}
	$, $	D_{s}=M_{s^{2}}D $  and $ P_{s}=P $ from the beginning of the proof yields the result.
\end{proof}

\section{An Application}\label{s:application}

Here we show an application of the first Riesz decomposition which is a Brelot type theorem.
\begin{theorem}\label{thm:Brelot}
	Let $H=H_{b,c,m}$ be subcritical and let $s$ be a non-negative superharmonic function on $X$. Then
	\[ Hs(x)\leq \inf_{y\in X}\frac{s(y)}{G(y,x)}=\frac{s(x)}{G(x,x)},\qquad x\in X. \]
	Moreover, there is equality 
	if and only if $s $ is a strictly positive multiple of $ G1_{o}$ and $ x=o $ or if $s=0$.
	%
\end{theorem}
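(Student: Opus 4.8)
The plan is to obtain all three assertions (the inequality, the identity, and the equality case) from the Riesz decomposition, Theorem~\ref{thm:rieszdecompGHM}, together with the minimum principle, Theorem~\ref{thm:minprinc}. First, for the inequality $Hs(x)\le\inf_{y}\frac{s(y)}{G(y,x)}$: since $s\ge 0$ is superharmonic, $0$ is a subharmonic minorant of $s$, so Theorem~\ref{thm:rieszdecompGHM} gives $s=GHs+\ghm_{s}$ with $Hs\in\cG$, $Hs\ge 0$, and $\ghm_{s}\ge 0$. Hence, for every $y\in X$,
\[ s(y)\;\ge\; GHs(y)\;=\;\sum_{z\in X}G(y,z)Hs(z)\;\ge\; G(y,x)Hs(x), \]
and dividing by $G(y,x)>0$ and taking the infimum over $y$ yields $\inf_{y}\frac{s(y)}{G(y,x)}\ge Hs(x)$.

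Next, for the identity $\inf_{y}\frac{s(y)}{G(y,x)}=\frac{s(x)}{G(x,x)}$: the value at $y=x$ is precisely $\frac{s(x)}{G(x,x)}$, so it suffices to show $s\ge\frac{s(x)}{G(x,x)}G(\cdot,x)$. I expect this to be the main obstacle: comparing $s$ with $\lambda G(\cdot,x)$ directly on $X$ does not work, because $s-\lambda G(\cdot,x)$ is only subharmonic at $x$ and the minimum principle is not available on the infinite set $X\setminus\{x\}$. The plan is to argue at finite level. Fix an increasing exhaustion $(K_n)$ of $X$ with finite sets and $x\in K_1$, and put $g_n=(H^{K_n})^{-1}1_{x}$, extended by $0$ off $K_n$, so that $g_n\uparrow G(\cdot,x)$ pointwise, $g_n(x)\uparrow G(x,x)<\infty$, and $g_n(x)>0$. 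Consider $\phi_n = s-\frac{s(x)}{g_n(x)}\,g_n$. Then $\phi_n(x)=0$, $\phi_n=s\ge 0$ on $X\setminus K_n$, and for $y\in K_n\setminus\{x\}$,
\[ H\phi_n(y)\;=\;Hs(y)-\frac{s(x)}{g_n(x)}\,(H^{K_n}g_n)(y)\;=\;Hs(y)-\frac{s(x)}{g_n(x)}\,1_{x}(y)\;=\;Hs(y)\;\ge 0, \]
using that $H$ restricted to $\mathcal C(K_n)$ is $H^{K_n}$ and $H^{K_n}g_n=1_x$. Thus $\phi_n\ge 0$ on $X\setminus(K_n\setminus\{x\})$ and $H\phi_n\ge 0$ on the finite set $K_n\setminus\{x\}$, so the minimum principle, Theorem~\ref{thm:minprinc}, gives $\phi_n\ge 0$ on $K_n$; that is, $s(y)\ge\frac{s(x)}{g_n(x)}g_n(y)$ for all $y\in K_n$. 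Letting $n\to\infty$ yields $s(y)\ge\frac{s(x)}{G(x,x)}G(y,x)$ for every $y$, hence the identity.

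Finally, for the equality case: if $s=0$, or if $s=\lambda G1_{o}$ with $\lambda>0$ and $x=o$, then $Hs(x)=\lambda=\frac{s(x)}{G(x,x)}$ by a direct computation. Conversely, assume $Hs(x)=\frac{s(x)}{G(x,x)}$ for some $x$. From the Riesz decomposition, $s(x)=G(x,x)Hs(x)+\sum_{z\ne x}G(x,z)Hs(z)+\ghm_{s}(x)$, and comparing with $s(x)=G(x,x)Hs(x)$ forces $\sum_{z\ne x}G(x,z)Hs(z)+\ghm_{s}(x)=0$. Since $G(x,z)>0$, $Hs\ge 0$ and $\ghm_{s}\ge 0$, this gives $Hs(z)=0$ for all $z\ne x$ and $\ghm_{s}(x)=0$; the latter, together with the Harnack inequality, Proposition~\ref{thm:harnackineq}, applied on connected finite subsets of $X$, forces $\ghm_{s}\equiv 0$. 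Hence $s=GHs=G\big(Hs(x)1_{x}\big)=Hs(x)\cdot G1_{x}$, which is either $0$ (if $Hs(x)=0$) or a strictly positive multiple of $G1_{x}$ (if $Hs(x)>0$), with $o=x$; this is the asserted dichotomy.
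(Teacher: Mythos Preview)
Your proof is correct and follows essentially the same route as the paper: the inequality via the Riesz decomposition, the identity $\inf_y s(y)/G(y,x)=s(x)/G(x,x)$ via the minimum principle applied on $K_n\setminus\{x\}$ to $s$ minus a scaled finite-volume Green function $(H^{K_n})^{-1}1_x$, and the equality case by reading off the vanishing of the remaining terms in the Riesz decomposition and invoking Harnack for $\ghm_s$. Your choice of the scaling constant $s(x)/g_n(x)$ (so that $\phi_n(x)=0$ exactly) is a minor cosmetic variant of the paper's $s(x)/G(x,x)$ and, if anything, slightly cleaner; it also avoids dividing by $s(x)$, so the degenerate case $s(x)=0$ needs no separate treatment.
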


\begin{remark}
	This theorem  has a continuous analogue, see e.g. \cite[Theorem~5.7.14]{HelmsNeu}, and goes back to Brelot, \cite{B44}. In the continuous case one even has equality. In contrast we show that in the discrete setting equality always fails to hold apart from two trivial cases. 
	The failure of the analogy between the discrete and the continuum setting stems from the fact that the discrete Green's function does not have a singularity at the diagonal.
\end{remark}

We need the following Harnack Principle.

\begin{lemma}[Harnack Principle, Lemma~4.6 in \cite{KePiPo1}]
	\label{lem:harnackprinc}
	Let $C>0$ and fix some $x\in X$. Assume that we have a sequence of positive superharmonic functions $(u_n)$ such that $C^{-1}\leq u_n(x) \leq C$. Then there exits a subsequence $(u_{n_k})$ that converges pointwise to a strictly positive superharmonic function $u$. 
\end{lemma}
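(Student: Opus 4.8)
The plan is to combine the local Harnack inequality, Proposition~\ref{thm:harnackineq}, with a diagonal extraction argument and a Fatou-type estimate that guarantees the limit is again superharmonic. I would first record that $X$ is countable: for each $z\in X$ the neighbour set $\{y:b(z,y)>0\}$ is at most countable because $\deg(z)=\sum_{y}b(z,y)<\infty$, and, $X$ being connected, it is the connected component of $x$, hence a countable union of such countable sets.

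Next I would produce uniform two-sided bounds at every vertex. Fix $y\in X$. By connectedness there is a finite connected set $K_{y}\sse X$ containing both $x$ and $y$ (take the vertices of a path joining them). Each $u_{n}$ is positive and satisfies $(H-0)u_{n}=Hu_{n}\ge 0$ on $K_{y}$, so Proposition~\ref{thm:harnackineq} applied with $f=0$ provides a constant $C_{y}>0$, independent of $n$, with $\max_{K_{y}}u_{n}\le C_{y}\min_{K_{y}}u_{n}$. Since $x\in K_{y}$ and $C^{-1}\le u_{n}(x)\le C$, this gives
\[ (C_{y}C)^{-1}\le u_{n}(y)\le C_{y}C\qquad\text{for all }n\in\mathbb{N}. \]
Then I enumerate $X=\{z_{1},z_{2},\dots\}$ and run the usual diagonal procedure: $(u_{n}(z_{1}))_{n}$ is bounded, extract a convergent subsequence; iterate over $z_{2},z_{3},\dots$; and pass to the diagonal subsequence $(u_{n_{k}})_{k}$, which then converges pointwise on all of $X$ to some $u\colon X\to[0,\infty)$ with $u(z)\ge (C_{z}C)^{-1}>0$ for every $z$, so that $u$ is strictly positive, and in particular $u(x)\in[C^{-1},C]$.

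It remains to check that $u$ is superharmonic. By Lemma~\ref{lem:deg+q<=0} we have $\deg+c>0$, so $Hu_{n_{k}}\ge 0$ is equivalent to $u_{n_{k}}(z)\ge(\deg(z)+c(z))^{-1}\sum_{y\in X}b(z,y)u_{n_{k}}(y)$ for all $z$. Since $u_{n_{k}}\ge 0$ and $b\ge 0$, Fatou's lemma yields
\[ \sum_{y\in X}b(z,y)u(y)\le\liminf_{k\to\infty}\sum_{y\in X}b(z,y)u_{n_{k}}(y)\le(\deg(z)+c(z))\,u(z)<\infty, \]
which shows simultaneously that $u\in\mathcal{F}$ and that $Hu(z)\ge 0$ for every $z\in X$. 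Hence $u$ is a strictly positive superharmonic function, as required.

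The delicate point, and the one I expect to be the main obstacle, is this last step: a pointwise limit of superharmonic functions need not be superharmonic in general, and one must justify passing the limit through the possibly infinite sum defining $H$. Nonnegativity of the $u_{n}$ together with $\deg+c>0$ is exactly what makes Fatou's lemma applicable here and, as a byproduct, forces $u\in\mathcal{F}$; everything else (the Harnack bounds and the diagonal extraction) is routine.
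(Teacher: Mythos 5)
Your argument is correct; note that the paper itself does not prove this lemma but quotes it from \cite{KePiPo1}, and your proof (uniform two-sided bounds via the local Harnack inequality, Proposition~\ref{thm:harnackineq}, along finite connected sets containing $x$, a diagonal subsequence over the countable vertex set, and Fatou's lemma to get $u\in\mathcal{F}$ and $Hu\ge 0$ in the limit) is essentially the standard proof given there. The one point worth making explicit is that $\deg+c>0$ (Lemma~\ref{lem:deg+q<=0}) is available because the existence of the positive superharmonic functions $u_n$ already forces non-negativity of $H$ by the Allegretto--Piepenbrink theorem, so your rewriting of $Hu_{n_k}\ge 0$ as $(\deg+c)\,u_{n_k}\ge \sum_y b(\cdot,y)u_{n_k}(y)$ is justified.
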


Now, we prove the Brelot type theorem.

\begin{proof}[Proof of Theorem~\ref{thm:Brelot}]
	By the Riesz decomposition theorem, Theorem~\ref{thm:rieszdecompGHM}, we get for all $y\in X$
	\begin{align*}
	s(y)= GHs(y)+ \ghm_s(y) \geq G(y,x)Hs(x).
	\end{align*}
	Since $G(y,x)>0$ we get the desired inequality.

	\emph{Claim}: For all non-negative superharmonic functions $ s $ and $ x,y\in X $ we have
	\[\frac{s(y)}{G(x,y)}\geq \frac{s(x)}{G(x,x)}> 0.\]
	
	\emph{Proof of the claim.} Let us fix $x\in X$. Recall that for every finite set $K\subset X$ the inverse $(H^{K})^{-1}$ exists on $ \mathcal{C}(K) $. 
	Let $(K_n)$ be an increasing exhaustion of $X$ with finite sets with $ x\in K_{n} $, $ n\in\NN $. The goal is to apply the minimum principle, Theorem~\ref{thm:minprinc}, to 
	\[u = \frac{G(x,x)}{s(x)}s- (H^{K_n})^{-1}1_x\]
	for  $K=K_n\setminus\{x\}$, $n\in \NN$. Since $  (H^{K_n})^{-1}1_x$  is harmonic on  $ K_n\setminus\set{x}$, we have  $ Hu\ge0 $ on  $ K_n\setminus\set{x}$.
	Moreover, on $ X\setminus K_n$, we have $ u\geq 0 $ and, furthermore, $ u(x)=0 $ by definition of $ u $, i.e., $ u\ge0  $ on $ X\setminus K_{n} $. Hence, we can apply the minimum principle to $ u $ on $ K_{n}\setminus\{x\}$ and get that $u\geq 0$ on $X$. Since $(H^{K_n})^{-1}1_x\nearrow G1_x$ pointwise by \cite[Theorem~5.16]{KePiPo1}, we infer, for any $y\in X$
	\begin{align*}
	\frac{G(x,x)}{s(x)}s(y)- G(x,y)\ge0
	\end{align*}
and, therefore, the claim follows.

The claim proves the equality in the statement of the theorem.

Next, we turn to the characterization of equality. Clearly, we have equality whenever $ s $ is a strictly positive multiple of $ G1_{o} $ and $ x=o $, or $ s=0 $.

On the other hand, assume there is equality in $ x $. Then,
$$  G(x,x)Hs(x)= s(x)=GHs(x)+\ghm_{s}(x)  $$
by the Riesz decompostion, Theorem~\ref{thm:rieszdecompGHM}, and, therefore,
\begin{align*}
\sum_{z\in X\setminus\set{x}}G(x,z)Hs(z)+ \ghm_s(x)=0.
\end{align*}
Since all terms involved are non-negative by Theorem~\ref{thm:rieszdecompGHM}, we infer that they must be equal zero. 
If $ \ghm_{s} $ vanishes in $ x $ it vanishes everywhere by the the Harnack inequality, Proposition~\ref{thm:harnackineq}. This gives that $ s $ is a potential with non-negative charge which has to vanish everywhere outside of $ x $. This leaves the cases of  $s  $ being either a strictly positive  multiple of $ G1_{x} $ or $ s=0 $.
\end{proof}

\textbf{Acknowledgements.} The authors acknowledge the financial support of the DFG.\medskip

The paper is based on the first part of the master's thesis \cite{Masterarbeit}.

\newpage
\newcommand{\etalchar}[1]{$^{#1}$}

\end{document}